\def\theequation{\thesection.\@arabic \c@equation}
\def\@citecolor{blue}
\def\@urlcolor{blue}
\def\@linkcolor{blue}
\def\theenumi{\@roman\c@enumi}
\theoremstyle{plain}
\newtheorem{theorem}[equation]{Theorem}
\newtheorem{lemma}[equation]{Lemma}
\newtheorem{corollary}[equation]{Corollary}
\newtheorem{proposition}[equation]{Proposition}
\theoremstyle{definition}
\newtheorem{remark}[equation]{Remark}
\newenvironment{remarkbox}[1][]{%
    \begin{remark}[#1] \pushQED{\qed}}{\popQED \end{remark}}
\newtheorem{discussion}[equation]{Discussion}
\newenvironment{discussionbox}[1][]{%
    \begin{discussion}[#1]\pushQED{\qed}}{\popQED \end{discussion}}
\newtheorem{observation}[equation]{Observation}
\newenvironment{observationbox}[1][]{%
    \begin{observation}[#1]\pushQED{\qed}}{\popQED \end{observation}}
\newcommand{\define}[1]{\emph{#1}}
\newcommand{\naturals}{\mathbb{N}}
\newcommand{\ints}{\mathbb{Z}}
\newcommand{\complex}{\mathbb{C}}
\DeclareMathOperator{\In}{in}
\DeclareMathOperator{\charact}{char}
\DeclareMathOperator{\hilbertPoset}{\mathcal H}
\DeclareMathOperator{\idealPoset}{\mathcal I}
\DeclareMathOperator{\tor}{Tor}
\DeclareMathOperator{\homology}{H}
\DeclareMathOperator{\coker}{coker}
\DeclareMathOperator{\image}{Im}
\DeclareMathOperator{\rank}{rank}
\def\hssymb{\mathfrak z}
\newcommand{\fraka}{{\mathfrak a}}
\newcommand{\frakb}{{\mathfrak b}}
\newcommand{\calB}{{\mathcal B}}
\newcommand{\bfe}{{\mathbf e}}
\newcommand{\bff}{{\mathbf f}}
\newcommand{\frakI}{{\mathfrak I}}
\newcommand{\frakm}{{\mathfrak m}}
\renewcommand{\to}{\longrightarrow}
\DeclareMathOperator{\lpp}{lpp}
\title
{Poset Embeddings of Hilbert functions and Betti numbers}
\author[G.~Caviglia]{Giulio Caviglia}
\address{Department of Mathematics, Purdue University, West Lafayette IN
47907, USA}
\email{gcavigli@math.purdue.edu}
\author[M.~Kummini]{Manoj Kummini}
\address{Chennai Mathematical Institute, Siruseri, Tamilnadu 603103, India}
\email{mkummini@cmi.ac.in}
\thanks{The work of the first author was supported by a grant from the
Simons Foundation (209661 to G.~C.). 
In addition, both the authors thank 
Mathematical Sciences Research Institute, Berkeley CA,
where part of this work was done,
for support and hospitality during Fall 2012.}
\keywords{Graded Betti numbers, embeddings of Hilbert functions, hyperplane
restriction theorems, lex-plus-powers conjecture}
\begin{document}

\begin{abstract}
We study inequalities between graded Betti numbers of 
ideals in a standard graded algebra over a field and their images under 
embedding maps, defined earlier by us in [Math.~Z. \textbf{274} (2013), no.~3-4,
pp.~809-819; arXiv:1009.4488].
We show that if graded Betti numbers 
do not decrease when we replace ideals in an algebra 
by their embedded versions, then the same behaviour is carried over 
to ring extensions.
As a corollary we give alternative inductive proofs of
earlier results of Bigatti, Hulett, Pardue, Mermin-Peeva-Stillman and
Murai.
We extend a hypersurface restriction theorem of Herzog-Popescu to the
situation of embeddings.
We show that we can obtain the Betti table of an ideal in the extension
ring from the Betti table of its embedded version
by a sequence of consecutive cancellations.
We further show that the lex-plus-powers
conjecture of Evans reduces to the Artinian situation.  
\end{abstract}

\maketitle

\section{Introduction}
\label{sec:intro}

This paper is part of our study of embeddings of the poset of Hilbert
series into the poset of ideals, in a standard-graded algebra $R$ over a
field $\Bbbk$. 
In our earlier paper~\cite{CaKuEmbeddings10}, we looked into
ways of embedding the set $\hilbertPoset_R$ of all Hilbert  series of
graded $R$-ideals (partially ordered by comparing the coefficients) 
into the set $\idealPoset_R$ of graded $R$-ideals (partially ordered by
inclusion).
Here we look at the behaviour of graded Betti numbers when we replace an
ideal by an ideal (with the same Hilbert function) that is in the image of
the embedding.

Examples of such embeddings, studied classically, are polynomial
rings and quotient rings of polynomial rings by regular sequences 
generated by powers of the variables.
If $R$ is a polynomial ring, then for every
$R$-ideal $I$, there exists a lex-segment ideal $L$ in $R$ such that the
Hilbert functions of $I$ and $L$ are identical (a theorem of
F.~S.~Macaulay, see~\cite {BrHe:CM}*{Section~4.2})
and, moreover, each of the graded Betti numbers of $L$ is at least as large
as the corresponding graded Betti numbers of $I$ 
(A.~M.~Bigatti~\cite {BigaUpperBds93}, 
H.~A.~Hulett~\cite {HuleMaxBettiNos93} and 
K.~Pardue~\cite {PardueDefClass96}).
Similarly, if $R = \Bbbk[x_1, \ldots, x_n]/(x_1^{e_1}, \ldots, x_n^{e_n})$
for some integers $2 \leq e_1 \leq \cdots \leq e_n$, then for every
$R$-ideal $I$ there exists a lex-segment 
$\Bbbk[x_1, \ldots, x_n]$-ideal $L$ such that $I$ and $LR$ have 
identical Hilbert functions (J.~B.~Kruskal~\cite{KruskalNoOfSimpl63} 
and G.~Katona~\cite{KatonaFiniteSets68} 
for the case $e_1=\cdots=e_n=2$,
and G.~F.~Clements and B.~Lindstr\"{o}m
\cite{ClemLindMacaulayThm69}, in general). Again, as 
$\Bbbk[x_1, \ldots, x_n]$-modules, each  graded Betti
number of $R/LR$ is at least as large as the corresponding graded Betti
number of $R/I$; this was proved by J.~Mermin, I.~Peeva and
M.~Stillman~\cite{MPSSquares08} for the case $e_1=\cdots=e_n=2$ 
in characteristic zero, by S.~Murai~\cite
{MuraiBPP08} in the general case for strongly
stable ideals and by Mermin and Murai~\cite {MeMuLPPPurePwrs08} in full
generality.  In both these cases, mapping the Hilbert series of $I$ to $L$
(or to $LR$ in the second case) gives an embedding of $\hilbertPoset_R$
into $\idealPoset_R$, such that the graded Betti numbers do not decrease
after the embedding.
See~\cites {GMPMappingCones11, GMPveronese10, IyPaMaxMinResln98}
for comparing the graded Betti numbers of $R/I$ and $R/LR$
over $\Bbbk[x_1, \ldots, x_n]$ and \cite{MuPeCLRings12} for 
comparing the graded Betti numbers over $R$.

Before we spell out what is done in this paper, let us fix some notation.
By $z$ we mean an indeterminate over $R$. Set $S = R[z]/(z^t)$, where $t
\geq 1 \in \naturals$ or $t=\infty$; if $t=\infty$, we mean that $z^t = 0$. By $A$
we denote a standard-graded polynomial ring over $\Bbbk$ that minimally
presents $R$, i.e., $\ker (A \to R)$ does not have any linear forms.
Let $B = A[z]$. The
\emph{graded Betti numbers} of a finitely generated graded $R$-module $M$
are $\beta_{i,j}^R(M) = \dim_\Bbbk \tor_i^R(M, \Bbbk)_j$. The \emph{Betti
table} of $M$, denoted $\beta^R(M)$, is 
$\sum_{i,j}\beta_{i,j}^R(M)\bfe_{i,j}$, where 
$\{\bfe_{i,j}, i \in \naturals, j \in \ints\}$ 
is the standard basis of $\ints^{\naturals \times \ints}$.

In this paper, we show that if graded Betti numbers 
do not decrease when we replace $R$-ideals by their embedded versions, 
then the same behavior extends to $S$-ideals; see
Theorem~\ref{theorem:bettiNumbers} for the precise statement.  
This
generalizes analogous results of Bigatti, Hulett, Pardue and Murai
mentioned above. Moreover, our technique gives short and self-contained
proofs of~\cite {MPSSquares08}*{Theorem~1.1} (more precisely, the key
intricate step for proving it for 
{strongly-stable-plus-squares}
ideals --- see~\cite {MPSSquares08}*{Section~3~and~4})
and~\cite
{MuraiBPP08}*{Theorem~1.1} (which generalizes the above theorem to
arbitrary powers); see Corollary~\ref{corollary:MPSMurai}.

We show that we can obtain {the Betti table of an $S$-ideal
from the Betti table of its embedded version}
by a sequence of consecutive cancellations. This is analogous to, and
motivated by, the similar result proved by Peeva
comparing the Betti table of an arbitrary homogeneous ideal in a polynomial
ring with the Betti table of the lex-segment ideal with the same Hilbert
function~\cite {PeevConsecCanc04}*{Theorem~1.1}. Additionally, we show that
a minimal graded $B$-free resolution of an ideal in the image of the
embedding map can be seen as an iterated mapping cone. The simplest
instance of this is the Eliahou-Kervaire resolution
(see~\cites{ElKeMinimalReslns90, PeStEKResln08}) of strongly-stable
monomial ideals in polynomial rings.
Similar
iterated mapping cones have been considered for ideals containing monomial
regular sequences, and have been used to obtain exact expressions for
graded Betti numbers; see~
\cites{MPSSquares08, MuraiBPP08, GMPMappingCones11}.
As an application of Theorem~\ref{theorem:bettiNumbers}, we show that the
lex-plus-powers conjecture of E.~G.~Evans reduces to the Artinian
situation (Theorem~\ref{theorem:lppReducesToArtinian}).

A word about the proofs. 
Problems on finding bounds for Betti numbers such as those
studied in much of the body of work above 
can be, inductively, reduced to studying 
polynomial extensions by one variable, or their quotients.
This is what we address in Theorem~\ref{theorem:bettiNumbers}.
Its proof makes crucial use  of~\cite{CaKuEmbeddings10}*{Theorem~3.10}
(reproduced as Theorem~\ref{theorem:extnRings}\eqref{theorem:strongHyp}
below) which is an analogue of the Hyperplane Restriction Theorem of
M.~Green~\cite {GreenHyperplane89}, and of similar results of J.~Herzog and
D.~Popescu~\cite {HerzPopeRestrictionTheorem98} and of Gasharov~\cite
{GasharovRestrictionTheorem99}.  Caviglia and
E.~Sbarra~\cite{CaSbLPPCoh12}*{Theorem~3.1}
use~Theorem~\ref{theorem:extnRings}\eqref{theorem:strongHyp},
again, to prove a
result similar to Theorem~\ref{theorem:bettiNumbers}, where graded Betti
numbers are replaced by Hilbert series of local cohomology modules.
Also see~\cite {ChPeGaMaxBettiNos02} for a proof of the
aforementioned result of Bigatti and Hulett, in characteristic zero, using
Green's Hyperplane Restriction Theorem.

We thank the referee for a careful reading and suggestions that improved
the exposition. The computer algebra system \texttt{Macaulay2}~\cite{M2}
provided valuable assistance in studying examples.

\section{Embeddings}
\label{sec:background}
We recall some notation and definitions from~\cite {CaKuEmbeddings10}.
Let $\Bbbk$ be a field and $A$ a finitely generated polynomial ring over
$\Bbbk$. We treat $A$ as standard graded, i.e., the indeterminates have
degree one. 
Let $\fraka$ be a homogeneous $A$-ideal and $R = A/\fraka$.
Let $\idealPoset_R = \{J : J \ \text{is a homogeneous $R$-ideal}\}$,
considered as a poset under inclusion. For a finitely generated
graded $R$-module $M = \oplus_{t \in \ints} M_t$,
the \define{Hilbert series} of $M$ is the formal power series
\[
H_M(\hssymb) = \sum_{t \in \ints} \left(\dim_\Bbbk M_t\right) \hssymb^t \in
\ints[[\hssymb]].
\]
The \define{poset of Hilbert series of homogeneous $R$-ideals} is the set
$\hilbertPoset_R = \{H_{J} : J \in \idealPoset_R\}$ endowed with the
following partial order: $H \succcurlyeq H'$ (or $H' \preccurlyeq H'$) 
if, for all $t \in \ints$,  the
coefficient of $\hssymb^t$ in $H$ is at least as large as that in $H'$.
(We point out that by $H_I$ we mean
the Hilbert series of $I$ and not of $R/I$.) 

In~\cite{CaKuEmbeddings10}  we studied the following question: 
given such a standard-graded $\Bbbk$-algebra $R$,
is there an (order-preserving) embedding $\epsilon : \hilbertPoset_R
\hookrightarrow \idealPoset_R$ as posets, such that $\mathrm{H} \circ \epsilon =
\mathrm{id}_{\hilbertPoset_R}$, where $\mathrm{H}:\idealPoset_R \longrightarrow
\hilbertPoset_R$ is the function $J \mapsto H_{J}$?
We will say that $\hilbertPoset_R$ \define{admits an embedding into}
$\idealPoset_R$ (and often, by abuse of terminology, merely that
$\hilbertPoset_R$ \define{admits an embedding})
if this question has an affirmative answer. 

Recall (from Section~\ref{sec:intro}) 
that $R = A/\fraka$, $B=A[z]$ and $S = R[z]/(z^t) = 
B/(\fraka B+(z^t))$  where $t$ is a positive integer or is
$\infty$.
Let $\frakI = \{i \in \naturals : i < t\}$.
Treat $B$ as \define{multigraded}, with $\deg x_i = (1,0)$ and $\deg z =
(0,1)$ and let the grading on $S$ be the one induced by this choice.
(In order to study embeddings of $\hilbertPoset_S$, we think of $S$ as
standard-graded, but we will use its multigraded structure, which is a
refinement of the standard grading, to construct them.)

Let $J$ be a multigraded $S$-ideal. It is isomorphic, as
an $R$-module, to $\bigoplus_{i\in \frakI}  J_{\langle i \rangle}z^i$, 
where for every $i\in \frakI$, $J_{\langle i \rangle} = (J:_Sz^i) \cap R$.
Notice that for all $i \in \frakI$, $i > 0$,
$J_{\langle i-1 \rangle} \subseteq J_{\langle i \rangle}$. We say that 
$J$ is \define{$z$-stable} if 
${\frakm}_RJ_{\langle i \rangle} \subseteq J_{\langle i-1 \rangle}$,
for all $i\in \frakI, i>0$ where $\frakm_R$ denotes the homogeneous maximal
ideal of $R$. We denote the set $\{H_J : J \;\text{is a $z$-stable
$S$-ideal}\}$ by $\hilbertPoset_S^{\mathrm{stab}}$. The main results from
our earlier work~\cite{CaKuEmbeddings10} are:

\begin{theorem}
\label{theorem:extnRings}
Let $t$ be a positive integer or $\infty$.
Let $S = R[z]/(z^t)$. 
Suppose that
$\hilbertPoset_R$ admits an embedding $\epsilon_R$. Then:
\begin{enumerate} 
\item 
\label{theorem:EmbExtendsToStableHilbPoset} 
There exists an embedding of posets 
$\epsilon_S: \hilbertPoset_S^{\mathrm{stab}} \hookrightarrow \idealPoset_S$ 
such that for all $H \in \hilbertPoset_S^{\mathrm{stab}}$, the Hilbert
function of $\epsilon(H)$ is $H$.  
\item \label{theorem:strongHyp}
If $J$ is a $z$-stable
$S$-ideal and $L = \epsilon_S(H_J)$, then
$H_{(J+(z^i))} \succcurlyeq H_{(L+(z^i))}$ for all $i \in \frakI$.
\end{enumerate}
\end{theorem}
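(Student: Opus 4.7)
The plan is to construct $\epsilon_S$ componentwise from $\epsilon_R$, exploiting the $R$-module decomposition of any multigraded $S$-ideal. For a $z$-stable $S$-ideal $J = \bigoplus_{i \in \frakI} J_{\langle i \rangle} z^i$, I would set
\[
\epsilon_S(H_J) \;:=\; \bigoplus_{i \in \frakI} \epsilon_R\bigl(H_{J_{\langle i \rangle}}\bigr)\, z^i.
\]
The inclusion $J_{\langle i-1 \rangle} \subseteq J_{\langle i \rangle}$ translates, via $H_{J_{\langle i-1 \rangle}} \preccurlyeq H_{J_{\langle i \rangle}}$ and the order-preservation of $\epsilon_R$, into $\epsilon_R(H_{J_{\langle i-1 \rangle}}) \subseteq \epsilon_R(H_{J_{\langle i \rangle}})$; this is precisely what guarantees that the right-hand side is closed under multiplication by $z$ and so forms a genuine multigraded $S$-ideal.

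For part~(i), I would then verify that this recipe descends to a function on $\hilbertPoset_S^{\mathrm{stab}}$. The crucial point is that $z$-stability forces $\frakm_R (J_{\langle i \rangle}/J_{\langle i-1 \rangle}) = 0$, so each successive quotient is a graded $\Bbbk$-vector space; together with the identity
\[
H_J(\hssymb) = \sum_{i \in \frakI} \hssymb^i H_{J_{\langle i \rangle}}(\hssymb),
\]
this pins down a canonical recovery of the sequence $\{H_{J_{\langle i \rangle}}\}_{i \in \frakI}$ from $H_J$. Order-preservation of $\epsilon_S$ and the identity $\mathrm{H} \circ \epsilon_S = \mathrm{id}$ then follow termwise from the corresponding properties of $\epsilon_R$; injectivity of $\epsilon_S$ is automatic since ideals with distinct Hilbert series are distinct.

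For part~(ii), set $L = \epsilon_S(H_J)$. Since $(z^i)$ contributes $R z^j$ to the $\langle j \rangle$-component exactly for $j \geq i$, one computes
\[
(L + (z^i))_{\langle j \rangle} = \begin{cases} L_{\langle j \rangle} = \epsilon_R\bigl(H_{J_{\langle j \rangle}}\bigr) & \text{if } j < i, \\ R & \text{if } j \geq i, \end{cases}
\]
with the parallel formula for $J + (z^i)$. Since $\epsilon_R$ preserves Hilbert functions, $H_{L_{\langle j \rangle}} = H_{J_{\langle j \rangle}}$ for each $j < i$, so the two Hilbert series
\[
H_{L+(z^i)} \;=\; \sum_{j < i} \hssymb^j H_{J_{\langle j \rangle}} + \sum_{j \in \frakI,\, j \geq i} \hssymb^j H_R \;=\; H_{J+(z^i)}
\]
coincide, yielding the required $\succcurlyeq$ (indeed, with equality).

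The main obstacle is the descent step in part~(i): showing that $\{H_{J_{\langle i \rangle}}\}$ depends only on $H_J$ among $z$-stable $J$'s. The condition $\frakm_R J_{\langle i \rangle} \subseteq J_{\langle i-1 \rangle}$ must be used in an essential way; I would attempt an inductive recovery, reading off $H_{J_{\langle 0 \rangle}}$ from the lowest-degree portion of $H_J$ and propagating upward using the $\frakm_R$-constraint, although care is needed since the multigraded Hilbert data is a priori richer than the standard-graded series.
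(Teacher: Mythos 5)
Your construction of $\epsilon_S$ is not well defined on $\hilbertPoset_S^{\mathrm{stab}}$, and the ``main obstacle'' you flag at the end is in fact fatal: the sequence $\{H_{J_{\langle i\rangle}}\}_{i\in\frakI}$ is \emph{not} determined by $H_J$ among $z$-stable ideals, so no inductive recovery of the components from $H_J$ exists. Concretely, take $R=\Bbbk[x,y]$ and $S=R[z]$, and consider $J=(x^2,xy,xz,y^3)$ and $J'=(x^2,xy,y^2)S$. Both are $z$-stable, and $H_{S/J}=H_{S/J'}=1+3\hssymb+3\hssymb^2+3\hssymb^3+\cdots$, but $J_{\langle 0\rangle}=(x^2,xy,y^3)$ and $J'_{\langle 0\rangle}=(x^2,xy,y^2)$ have different Hilbert functions (they differ already in degree $2$). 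Thus your recipe assigns two different ideals to the same element of $\hilbertPoset_S^{\mathrm{stab}}$. For the same reason, $H_J\preccurlyeq H_{J'}$ does not imply componentwise comparability, so order preservation of $\epsilon_S$ would not follow termwise from that of $\epsilon_R$ either.

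The same misconception sinks part (ii): you conclude $H_{J+(z^i)}=H_{\epsilon_S(H_J)+(z^i)}$ with \emph{equality}, which would require $H_{L_{\langle j\rangle}}=H_{J_{\langle j\rangle}}$ for all $j$. In the example above $H_{J+(z)}\neq H_{J'+(z)}$ (the quotients are $R/(x^2,xy,y^3)$ and $R/\frakm_R^2$), yet $\epsilon_S(H_J)=\epsilon_S(H_{J'})$, so your equality cannot hold for both; part (ii) is a genuine inequality --- a hypersurface restriction theorem --- and is the hard content of the statement. Note also that the present paper does not reprove this theorem: it quotes Theorems~3.3 and~3.10 of \cite{CaKuEmbeddings10}, where $\epsilon_S$ is built from the embedding filtration of $\epsilon_R$ by a construction that chooses the component Hilbert functions canonically from $H$ alone (roughly, pushing as much as possible into high powers of $z$), rather than reading them off a representative $J$. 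What survives from your write-up is the observation that $\bigoplus_{i\in\frakI}\epsilon_R(H_{J_{\langle i\rangle}})z^i$ is a $z$-stable ideal with the same Hilbert function as $J$; that is exactly Remark~\ref{remarkbox:EmbedIndividualPieces} and is used in the proof of Theorem~\ref{theorem:bettiNumbers}, but it does not define the embedding.
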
 

Part \eqref{theorem:EmbExtendsToStableHilbPoset} is the content
of~\cite{CaKuEmbeddings10}*{Theorem~3.3}, where we further assume that for
all $S$-ideals $J$, there is a $z$-stable $S$-ideal with the same Hilbert
function (i.e., $\hilbertPoset_S^{\mathrm{stab}} = \hilbertPoset_S$) and
conclude that $\hilbertPoset_S$ admits an embedding. 
If $t = \infty$, then 
$\hilbertPoset_S^{\mathrm{stab}} =
\hilbertPoset_S$~\cite{CaKuEmbeddings10}*{Lemma~4.1}. When $t$ is finite,
if $\Bbbk$ contains a primitive $t$th root of unity, 
$\fraka$ is a monomial ideal in some $\Bbbk$-basis $\calB$ of $A_1$,
$l^t \in \fraka$ for all $l \in \calB$ 
then, again, $\hilbertPoset_S^{\mathrm{stab}} =
\hilbertPoset_S$~\cite{CaKuEmbeddings10}*{Lemma~4.2}.
Part \eqref{theorem:strongHyp},  which can be viewed as a hypersurface
restriction theorem, is~\cite{CaKuEmbeddings10}*{Theorem~3.10};
see Discussion~\ref{discussionbox:hyperSurfRestr}.

\begin{remarkbox}
\label{remarkbox:EmbedIndividualPieces}
Let $J$ be a multigraded $S$-ideal. Write 
$J = \bigoplus_{i\in \frakI}  J_{\langle i \rangle}z^i$.  Then
$J' := \bigoplus_{i\in \frakI}  \epsilon_R(H_{J_{\langle i \rangle}})z^i$ 
is an $S$-ideal. If $J$ is additionally $z$-stable, then so is $J'$. To see
this, we note that,
since $J$ is $z$-stable, 
$H_{J_{\langle i-1 \rangle}}
\succcurlyeq
H_{{\frakm}_RJ_{\langle i \rangle}}$, and
therefore
$\epsilon_R(H_{J_{\langle i-1 \rangle}})
\supseteq
\epsilon_R(H_{{\frakm}_RJ_{\langle i \rangle}})
\supseteq
{\frakm}_R\epsilon_R(H_{J_{\langle i \rangle}})$,
where the last inclusion follows by using embedding filtrations~\cite
{CaKuEmbeddings10}*{Definition~2.3}. 
\end{remarkbox}

\begin{remarkbox}
\label{remark:propertiesEX} 
The map $\epsilon_S: \hilbertPoset_S^{\mathrm{stab}} \hookrightarrow
\idealPoset_S$ constructed in the proof of Theorem \ref{theorem:extnRings}
has additional properties inherited from $\epsilon_R$,
namely, if an $S$-ideal $L$ is in the image of
$\epsilon_S$, 
it is multigraded, $z$-stable and, when written
as $L=\bigoplus_{i\in \frakI}  L_{\langle i \rangle}z^i$,  the ideal 
$L_{\langle i \rangle}$ is in the image of $\epsilon_R$
for all $i \in \frakI$, 
i.e. $\epsilon_R(H_{L_{\langle i \rangle}})=L_{\langle i \rangle}$.
\end{remarkbox}

\begin{discussionbox}[\textbf{Restriction to general hypersurfaces}]
\label{discussionbox:hyperSurfRestr}
The Hyperplane Restriction Theorem of Green~\cite {GreenHyperplane89} 
(see, also,~\cite {GreenGins98}*{Theorem~3.4}) asserts that if $\Bbbk$ is
an infinite field,
$J$ is a $B$-ideal and $L$ the lex-segment $B$-ideal with
$H_I = H_L$, then for any general linear form $f \in S$,   
$H_{J + (f)} \succcurlyeq H_{L+(z)}$. 
(In the lexicographic order on $B$, $z$ is the last variable.)
Herzog and 
Popescu~\cite{HerzPopeRestrictionTheorem98}*{Theorem in Introduction} (in
characteristic zero) and 
Gasharov~\cite{GasharovRestrictionTheorem99}*{Theorem~2.2(2)} (in arbitrary
characteristic) generalized this to forms of arbitrary 
degree:
$H_{J + (f)} \succcurlyeq H_{L+(z^d)}$ for all general forms $f$ of degree $d$ for
all $d \geq 1$.
Restated in the language of embeddings, this is
$H_{J + (f)} \succcurlyeq H_{\epsilon_B(H_J)+(z^d)}$. Therefore, we may wonder
whether this is true more generally than for polynomial rings. More precisely, 
putting ourselves in the context of Theorem~\ref{theorem:extnRings},
we show that if $\charact \Bbbk = 0$, then
\begin{equation}
\label{equation:charZeroHRT}
H_{J + (f)} \succcurlyeq H_{\epsilon_S(H_J)+(z^d)}
\end{equation}
for all general
homogeneous elements $f \in S$ of degree $d$ and for all $d \in \frakI$. We
also show that the conclusion fails in positive characteristic.
In characteristic zero, the argument is as follows:
Let $g$ be a change  of
coordinates, fixing all the $x_i$ and sending $z$ to a sufficiently general
linear form. Let $w$ be the weight $w(x_i) = 1$ for all $i$ and $w(z) = 0$.
In characteristic zero, $\In_w(g\!\cdot\!J)$ is $z$-stable. (The proofs
of~\cite{CaKuEmbeddings10}*{Lemmas~4.1 and~4.2} use many steps of
distraction and taking initial ideals with respect to $w$, but
in characteristic zero, they can be replaced by a single step.) 
Now, $H_{J+(f)} = H_{g\!\cdot\!J+(g\!\cdot\!f)}$.
%
%
By
\cite{CavThesis04}*{7.1.2 and~7.1.3},
$H_{g\!\cdot\!J+(g\!\cdot\!f)} \succcurlyeq 
H_{\In_w(g\!\cdot\!J)+(g\!\cdot\!f)}$, 
%
Had we chosen $f$ to be $g^{-1}z^d$, we would have had the equality
$H_{\In_w(g\!\cdot\!J)+(g\!\cdot\!f)} = H_{\In_w(g\!\cdot\!J)+(z^d)}$;
however since $f$ is general, we have 
$H_{\In_w(g\!\cdot\!J)+(g\!\cdot\!f)} \succcurlyeq 
H_{\In_w(g\!\cdot\!J)+(z^d)}$. 
%
%
Finally, using
Theorem~\ref{theorem:extnRings}\eqref{theorem:strongHyp}, we see that
$H_{\In_w(gJ)+(z^d)} \succcurlyeq H_{\epsilon_S(H_{\In_w(gJ)})+(z^d)}$. Note
that $\epsilon_S(H_{\In_w(gJ)}) = \epsilon_S(H_J)$.

Now to show that the conclusion does not hold in positive characteristic,
consider $R = \Bbbk[x,y]/(x^p, y^p)$ where $p = \charact \Bbbk$. Let 
$S = R[z]$, $l$ a general linear form in $x,y$ and $J = z^pS$.
Then, $\epsilon_S$ denoting the embedding induced by the lexicographic
order on $\Bbbk[x,y,z]$, we have
\[
H_{J+(z+l)} \preccurlyeq H_{\epsilon_S(H_J)+(z)} \;\text{and}\;
H_{J+(z+l)} \neq H_{\epsilon_S(H_J)+(z)}; 
\]
contrast this with~\eqref{equation:charZeroHRT}.
To see
this, let us look at the corresponding quotients: $S/{J+(z+l)} \simeq R$
and, since $x^{p-1}y \in \epsilon_S(H_J)$, we have
$S/{\epsilon_S(H_J)+(z)}$ is a homomorphic image of $R/(x^{p-1}y)$. 
\end{discussionbox}

\begin{remarkbox}
\label{remark:embInherits}
Suppose that $\hilbertPoset_R$ admits an embedding $\epsilon$. Let 
$I = \epsilon(H)$ for some $H \in \hilbertPoset_R$. 
Then $\idealPoset_{R/I} \simeq \{J \in \idealPoset_R : I \subseteq J\}$ and 
$\hilbertPoset_{R/I} \simeq \{H_{J} : J \in \idealPoset_R, I \subseteq J\}$. 
In particular, $\epsilon$ induces an embedding of
$\hilbertPoset_{R/I}$ into
$\idealPoset_{R/I}$~\cite{CaKuEmbeddings10}*{Remark~2.8}.
Let $S = R[z]$. Then $\hilbertPoset_S$ admits an embedding $\epsilon_S$, by
Theorem~\ref{theorem:extnRings}. (See, also, the paragraph following the
theorem.) 
Let $J$ be an $S$-ideal that is in the image of $\epsilon_S$.
By above, $\hilbertPoset_{S/J}$ admits an embedding $\epsilon_{S/J}$.
Moreover suppose that $\epsilon_R$ is given by images of lex-segment ideals
of $A$, i.e., for all $H \in \hilbertPoset_R$, there is a lex-segment
$A$-ideal $L$ such that $\epsilon_R(H) = LR$. Then for all $H \in
\hilbertPoset_{S/J}$, there exists a lex-segment $A[z]$-ideal  $L$ such
that $\epsilon_{S/J}(H) = L(S/J)$; see~\cite
{CaKuEmbeddings10}*{Theorem~3.12 and Proposition~2.16}.
(In the lexicographic order on $A[z]$, $z$ is the last variable.)
We remark here that
in~\cite{MerminPeevaLexifying}*{Theorems~4.1 and~5.1}
Mermin and Peeva had shown that if $R$ is the quotient of
$A$ by a monomial ideal, then the lex-segment ideals of $A[z]$ give an
embedding of $\hilbertPoset_{S/J}$.
\end{remarkbox}

\begin{remarkbox}
\label{remark:embInheritsCL}
Let $A = \Bbbk[x_1, \ldots, x_n]$ and $B = A[x_{n+1}]$. Let 
$2 \leq e_1 \leq \cdots \leq e_{n+1} \leq \infty$.
Let $\fraka = (x_1^{e_1}, \ldots, x_{n}^{e_{n}})A$,
$\frakb = (x_1^{e_1}, \ldots, x_{n+1}^{e_{n+1}})B$, 
$R = A/\fraka$ and $S = B/\frakb$. Assume that we have
inductively constructed 
an embedding $\epsilon_R$ of $\hilbertPoset_R$ such that
for all $R$-ideals $I$, there exists a lex-segment
$A$-ideal $L$ such that $\epsilon_R(H_I)  = LR$.
Then $\hilbertPoset_S^{\mathrm{stab}}$
admits an embedding $\epsilon_S$, by Theorem~\ref{theorem:extnRings}.
We now argue that $\hilbertPoset_S^{\mathrm{stab}} = \hilbertPoset_S$.
Let $J$ be any $B$-ideal containing $\frakb$. Replacing $J$ by an initial
ideal, we may assume that $J$ is a monomial ideal.  
Therefore it suffices to prove that for all
monomial $B$-ideals $J$ containing $\frakb$, there is a $B$-ideal $J'$ such
that $J'S$ is $z$-stable and $H_{JS} = H_{J'S}$.
For this we may assume that $\Bbbk = \complex$.
Now, the remarks in the paragraph following Theorem~\ref{theorem:extnRings}
imply that $\hilbertPoset_S^{\mathrm{stab}} = \hilbertPoset_S$, so we have
an embedding $\epsilon_S$ of $\hilbertPoset_S$.
Again, by~\cite{CaKuEmbeddings10}*{Theorem~3.12 and Proposition~2.16} we
see that for all $S$-ideals $J$, there exists a lex-segment
$B$-ideal $L$ such that $\epsilon_S(H_J)  = LS$. As a corollary, we get the
theorem of Clements-Lindstr\"om mentioned in Section~\ref{sec:intro}. 
\end{remarkbox}

\section{Graded Betti numbers} 
\label{sec:gradedBettiNos}
We are now ready to state and prove the main result of this paper.
Let $\epsilon_R : \hilbertPoset_R \longrightarrow \idealPoset_R$ be an
embedding and $I$ be an $R$-ideal.
Then $\beta_{1,j}^R(R/I)
\leq \beta_{1,j}^R(R/\epsilon_R(H_{I}))$~\cite
{CaKuEmbeddings10}*{Remark~2.5}.
We do not know whether 
$\beta_{i,j}^R(R/I)
\leq
\beta_{i,j}^R(R/\epsilon_R(H_{I})) $ for all $i$
and $j$.
We show that if $
\beta_{i,j}^A(R/I)
\leq
\beta_{i,j}^A(R/\epsilon_R(H_{I}))
$ for all $i,j$, then a similar inequality
holds for the extension rings
considered in Theorem~\ref{theorem:extnRings}.
(In general, there are examples with
$ \beta_{i,j}^A(R/I) <
\beta_{i,j}^A(R/\epsilon_R(H_{I}))$~\cite{MeMuColoured10}*{Proposition~3.2}.)

\begin{theorem}
\label{theorem:bettiNumbers}
Let $t$ be a positive integer or $\infty$.
Let $S = R[z]/(z^t)$. 
Suppose that $\hilbertPoset_R$ admits an embedding $\epsilon_R$ and that
$\beta_{i,j}^A(R/I) \leq
\beta_{i,j}^A(R/\epsilon_R(H_{I}))$ for all
$R$-ideals $I$ and for all $i,j$.
Then for all $i, j$ and for all $z$-stable $S$-ideals $J$, 
$\beta_{i,j}^B(S/J) \leq \beta_{i,j}^B(S/\epsilon_S(H_{J}))$.
\end{theorem}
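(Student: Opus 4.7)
The plan is inductive, built around the multiplication-by-$z$ short exact sequence and combining the hypothesis on $\epsilon_R$ with the hypersurface restriction theorem (Theorem~\ref{theorem:extnRings}\eqref{theorem:strongHyp}). Set $L := \epsilon_S(H_J)$; by Remark~\ref{remark:propertiesEX}, $L$ is $z$-stable with each $L_{\langle k \rangle}$ in the image of $\epsilon_R$. Write $\widetilde{J} := (J :_S z)$ and $\widetilde{L} := (L :_S z)$.

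The key tool is the short exact sequence of $B$-modules
\[
0 \longrightarrow (S/\widetilde{J})(-1) \xrightarrow{\cdot z} S/J \longrightarrow R/J_{\langle 0 \rangle} \longrightarrow 0,
\]
which, via the long exact sequence of $\tor^B(-,\Bbbk)$, yields the termwise bound
\[
\beta^B_{i,j}(S/J) \leq \beta^B_{i,j-1}(S/\widetilde{J}) + \beta^B_{i,j}(R/J_{\langle 0 \rangle}).
\]
Since $R/J_{\langle 0 \rangle}$ is an $A$-module annihilated by $z$, tensoring its minimal $A$-free resolution with the Koszul complex on $z$ over $B$ gives the change-of-rings formula $\beta^B_{i,j}(R/J_{\langle 0 \rangle}) = \beta^A_{i,j}(R/J_{\langle 0 \rangle}) + \beta^A_{i-1,j-1}(R/J_{\langle 0 \rangle})$, which via the hypothesis on $\epsilon_R$ is controlled by the corresponding quantity for the embedded $R$-ideal $\epsilon_R(H_{J_{\langle 0 \rangle}})$.

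For the comparison, I would show that the same short exact sequence applied to $L$ in place of $J$ induces \emph{equality} in the Tor long exact sequence, yielding $\beta^B_{i,j}(S/L) = \beta^B_{i,j-1}(S/\widetilde{L}) + \beta^B_{i,j}(R/L_{\langle 0 \rangle})$. This is the ``iterated mapping cone is minimal'' feature of embedded ideals mentioned in Section~\ref{sec:intro}; it rests on Remark~\ref{remark:propertiesEX} to force the vanishing of the connecting homomorphism in the Tor LES, so that the mapping cone built from the sequence is the minimal $B$-free resolution of $S/L$. The inductive step then invokes the theorem on $\widetilde{J}$ (whose pieces $\widetilde{J}_{\langle k \rangle} = J_{\langle k+1 \rangle}$ form a shifted-down $z$-filtration, so an appropriate invariant strictly decreases), compatibly with $\widetilde{L}$. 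Theorem~\ref{theorem:extnRings}\eqref{theorem:strongHyp} applied to $i = 1$ gives $H_{J_{\langle 0 \rangle}} \succcurlyeq H_{L_{\langle 0 \rangle}}$, so $\epsilon_R(H_{J_{\langle 0 \rangle}}) \supseteq L_{\langle 0 \rangle}$; together with iterated use of the hypersurface restriction at higher values of $i$, this allows propagation of the hypothesis to compare $\epsilon_R(H_{J_{\langle 0 \rangle}})$ with $L_{\langle 0 \rangle}$ through auxiliary $z$-stable intermediate ideals such as $\bigoplus_k \epsilon_R(H_{J_{\langle k \rangle}}) z^k$, which is well-defined by Remark~\ref{remarkbox:EmbedIndividualPieces}.

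The hard part will be verifying the minimality of the iterated mapping cone for embedded ideals and coordinating the inductive bookkeeping when the pieces $J_{\langle k \rangle}$ and $L_{\langle k \rangle}$ are not termwise Hilbert-function comparable (only the cumulative partial sums $\sum_{k \geq i} \hssymb^k H_{J_{\langle k \rangle}}$ and $\sum_{k \geq i} \hssymb^k H_{L_{\langle k \rangle}}$ compare, by Theorem~\ref{theorem:extnRings}\eqref{theorem:strongHyp}). In particular, $\widetilde{L}$ need not coincide with $\epsilon_S(H_{\widetilde{J}})$, so the induction must be set up through auxiliary intermediate embedded ideals and repeated application of the hypersurface restriction, rather than through a direct structural identification.
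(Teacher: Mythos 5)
Your route is genuinely different from the paper's, and unfortunately it founders on its central claim. The paper never uses the colon-by-$z$ short exact sequence or any long exact sequence of $\tor$ at all: it passes to $B$-ideals, invokes the change-of-rings isomorphism $\tor_i^B(J,\Bbbk) \simeq \tor_i^A(J/zJ,\Bbbk)$ (Lemma~\ref{lemma:TorRingChange}), and then exploits the fact that $J/zJ$ splits as a \emph{direct sum} of $A$-modules $J_{\langle 0\rangle} \oplus M_1 \oplus M_2$ with $M_1$ a sum of shifted copies of $\Bbbk$ by $z$-stability, so all the needed identities are exact equalities of Hilbert series of Tor with no connecting maps to control; the comparison with $L$ is then done by the crude bounds of Lemma~\ref{lemma:torAndHilbSer} together with the hypersurface restriction theorem. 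By contrast, your pivotal assertion that the connecting homomorphism vanishes for embedded ideals, i.e.\ that $\beta^B_{i,j}(S/L) = \beta^B_{i,j-1}(S/\widetilde{L}) + \beta^B_{i,j}(R/L_{\langle 0\rangle})$, is \textbf{false}. Take $R = A = \Bbbk[x]$, $t=\infty$, $S = B = \Bbbk[x,z]$, and $L = (x,z)^2$, which is lex and hence in the image of $\epsilon_S$. Then $L_{\langle 0\rangle} = (x^2)$ and $\widetilde{L} = (x,z)$; the left side is $\bfe_{0,0} + 3\bfe_{1,2} + 2\bfe_{2,3}$, while the right side is $(\bfe_{0,1} + 2\bfe_{1,2} + \bfe_{2,3}) + (\bfe_{0,0} + \bfe_{1,1} + \bfe_{1,2} + \bfe_{2,3})$, which exceeds it by the consecutive pair $\bfe_{0,1} + \bfe_{1,1}$. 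The minimal iterated mapping cone alluded to in the paper is built from a different exact sequence, namely $0 \to (B/\frakm_A B)(-d-j) \to B/J'' \to B/J' \to 0$, adding one minimal generator at a time; its minimality rests on the colon of the new generator being all of $\frakm_A B$, a feature the colon-by-$z$ sequence does not share.

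Even setting that aside, the induction does not close as described. You correctly note that $\widetilde{L} \neq \epsilon_S(H_{\widetilde{J}})$ in general, but the situation is worse: $H_{\widetilde{J}} \neq H_{\widetilde{L}}$ whenever $L_{\langle 0\rangle} \subsetneq J_{\langle 0\rangle}$ (since $H_J = H_L$ forces $\hssymb\, H_{\widetilde{J}} \preccurlyeq \hssymb\, H_{\widetilde{L}}$ with strict inequality somewhere), so the inductive hypothesis bounds $\beta^B(S/\widetilde{J})$ by the Betti numbers of an ideal with a \emph{different} Hilbert function from $\widetilde{L}$, and no monotonicity statement of the form ``$H \preccurlyeq H'$ implies a controlled comparison of $\beta^B(S/\epsilon_S(H))$ with $\beta^B(S/\epsilon_S(H'))$'' is available in the paper or supplied by you. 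The appeal to ``auxiliary intermediate embedded ideals'' is exactly where a concrete mechanism is needed and none is given. To repair the argument you would essentially have to abandon the recursion on $(J:_S z)$ and pass to the all-at-once decomposition of $J/zJ$, which is the paper's proof.
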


Suppose that $t$, $\Bbbk$ and $A_1$ satisfy the conditions 
discussed after Theorem~\ref{theorem:extnRings} that ensure that 
$\hilbertPoset_S^{\mathrm{stab}} = \hilbertPoset_S$. Then 
for all $S$-ideals
$J$, there exists a $z$-stable $S$-ideal $J'$ such that $H_{J'} = H_J$ and
$\beta_{i,j}^B(S/J) \leq \beta_{i,j}^B(S/J')$. 
This inequality of Betti numbers follows from the proofs
of~\cite{CaKuEmbeddings10}*{Lemmas 4.1 and~4.2}, and the
upper-semi-continuity of graded Betti numbers in flat families.
Hence, in these situations, we can conclude from
Theorem~\ref{theorem:bettiNumbers}
that 
for all $i$ and $j$ and for all $S$-ideals $J$, 
$ \beta_{i,j}^B(S/J) \leq \beta_{i,j}^B(S/\epsilon_S(H_{J}))$.

We begin with a (somewhat inefficient) bound on the Hilbert series of
Tor modules.
\begin{lemma}
\label{lemma:torAndHilbSer}
Let $R'$ be any positively graded $\Bbbk$-algebra of finite type.
Let $M$ and $N$ be finitely generated graded $R'$-modules. Then
$H_{\tor^{R'}_i(M,N)} \preccurlyeq H_M H_{\tor^{R'}_i(N,\Bbbk)}$. 
In particular, if $N$ is a graded ${R'}$-submodule of $M$ or a graded 
homomorphic image of $M$, then
$H_{\tor^{R'}_i(M,\Bbbk)}  \preccurlyeq
H_{\tor^{R'}_i(N,\Bbbk)}  + (H_M-H_N)  H_{\tor^{R'}_i(\Bbbk,\Bbbk)}$.
\end{lemma}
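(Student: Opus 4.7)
The plan is to derive the first inequality from a minimal graded free resolution $F_\bullet$ of $N$ over $R'$. Such a resolution exists because $R'$ is positively graded of finite type; moreover $F_i = \bigoplus_j R'(-j)^{\beta_{i,j}}$ with $\beta_{i,j} = \dim_\Bbbk \tor^{R'}_i(N,\Bbbk)_j$. Tensoring with $M$ yields a complex whose $i$-th module is $F_i \otimes_{R'} M = \bigoplus_j M(-j)^{\beta_{i,j}}$; its Hilbert series is exactly $H_M \cdot H_{\tor^{R'}_i(N,\Bbbk)}$. Since $\tor^{R'}_i(M,N)$ is by definition the $i$-th homology of $F_\bullet \otimes_{R'} M$, it is a graded subquotient of $F_i \otimes_{R'} M$, and the Hilbert series of a subquotient is coefficient-wise at most that of the ambient graded module. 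This proves the first assertion.

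For the ``in particular'' statement I would use a short exact sequence and the long exact sequence in Tor, combined with the first part. If $N \subseteq M$, the sequence $0 \to N \to M \to M/N \to 0$ produces
\[
\tor^{R'}_i(N,\Bbbk) \to \tor^{R'}_i(M,\Bbbk) \to \tor^{R'}_i(M/N,\Bbbk),
\]
so $\tor^{R'}_i(M,\Bbbk)$ is an extension of a graded submodule of $\tor^{R'}_i(M/N,\Bbbk)$ by a graded quotient of $\tor^{R'}_i(N,\Bbbk)$. Hence
\[
H_{\tor^{R'}_i(M,\Bbbk)} \preccurlyeq H_{\tor^{R'}_i(N,\Bbbk)} + H_{\tor^{R'}_i(M/N,\Bbbk)}.
\]
Applying the first part to $M/N$ (with Hilbert series $H_M - H_N$) in the role of $M$ and $\Bbbk$ in the role of $N$ bounds the second summand by $(H_M - H_N) H_{\tor^{R'}_i(\Bbbk,\Bbbk)}$, giving the claim. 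The surjective case is symmetric: take $K = \ker(M \twoheadrightarrow N)$, note $H_K = H_M - H_N$, use the long exact sequence $\tor^{R'}_i(K,\Bbbk) \to \tor^{R'}_i(M,\Bbbk) \to \tor^{R'}_i(N,\Bbbk)$, and bound $H_{\tor^{R'}_i(K,\Bbbk)}$ using the first part applied to $K$.

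I do not expect a real obstacle; both halves are routine consequences of minimal free resolutions and the long exact sequence in Tor. The only points to record carefully are that the Hilbert series of a graded subquotient is coefficient-wise bounded by that of the ambient graded module (immediate from dimension counting in each graded piece, together with additivity of $\dim_\Bbbk(-)_t$ on short exact sequences of finite-dimensional vector spaces), and that for a positively graded $\Bbbk$-algebra of finite type the graded Betti numbers $\beta_{i,j}^{R'}(N)$ obtained from a minimal resolution indeed compute $\dim_\Bbbk \tor^{R'}_i(N,\Bbbk)_j$.
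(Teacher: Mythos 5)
Your proposal is correct and follows essentially the same route as the paper: the first inequality comes from tensoring a minimal graded free resolution of $N$ with $M$ and observing that Tor is a graded subquotient, and the second comes from the long exact sequence of Tor applied to the kernel or cokernel $L$ (with $H_L = H_M - H_N$) together with the first part applied to $L$ and $\Bbbk$. No issues.
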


\begin{proof}
Let $F_\bullet$ be a minimal graded ${R'}$-free resolution of $N$. Then 
$H_{\tor^{R'}_i(M,N)} \preccurlyeq H_{M \otimes_{R'} F_i} = 
H_MH_{\tor^{R'}_i(N,\Bbbk)}$,
proving the first assertion. For the second assertion, 
let $L = \coker (N \to M)$ or $L = \ker (M \to N)$, as the case is. Then 
$H_{\tor^{R'}_i(M,\Bbbk)} 
\preccurlyeq H_{\tor^{R'}_i(N,\Bbbk)}  + H_{\tor^{R'}_i(L,\Bbbk)} \preccurlyeq
H_{\tor^{R'}_i(N,\Bbbk)}  + H_LH_{\tor^{R'}_i(\Bbbk,\Bbbk)}$ where the
first inequality follows from the exact sequence of $\tor$ and the
second one follows from the first part of this proposition. Now
note that $H_L = H_M - H_N$. 
\end{proof}

The following lemma is perhaps well-known to many readers, but we give a
proof for the sake of completeness.
\begin{lemma}
\label{lemma:TorRingChange}
Identify $A$ with the quotient ring $B/(zB)$. Then for all $B$-ideals
$\frakb$, 
$\tor_i^B(\frakb, \Bbbk) \simeq \tor_i^{A}(\frakb/z\frakb, \Bbbk)$.
\end{lemma}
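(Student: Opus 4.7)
The plan is to construct a graded $A$-free resolution of $\frakb/z\frakb$ by reducing a graded $B$-free resolution of $\frakb$ modulo $z$, and then compute both Tor modules from the same complex.

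More precisely, since $B = A[z]$ is a polynomial extension, the element $z$ is a nonzerodivisor on $B$, and hence on every $B$-submodule of $B$; in particular, it acts regularly on $\frakb$. I would first take a graded $B$-free resolution $F_\bullet \to \frakb$. Applying $-\otimes_B \frakb$ to the short exact sequence $0 \to B(-1) \xrightarrow{\;z\;} B \to A \to 0$ and using the long exact sequence of Tor shows that $\tor_1^B(A,\frakb)$ is identified with the $z$-torsion of $\frakb$ (which vanishes) and that $\tor_i^B(A,\frakb) = 0$ for $i \geq 2$. Consequently, the complex
\[
F_\bullet \otimes_B A \;=\; F_\bullet/zF_\bullet
\]
is exact in positive homological degrees with zeroth homology $\frakb\otimes_B A = \frakb/z\frakb$, and each of its terms is a graded free $A$-module; hence it is a graded $A$-free resolution of $\frakb/z\frakb$.

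Finally, I would use the canonical identification $\Bbbk \simeq A \otimes_B \Bbbk$ of $A$-modules (both sides being the residue field killed by $z$ and by $\frakm_A$) to rewrite
\[
\tor_i^A(\frakb/z\frakb,\Bbbk) \;=\; H_i\bigl((F_\bullet/zF_\bullet)\otimes_A \Bbbk\bigr) \;=\; H_i(F_\bullet \otimes_B \Bbbk) \;=\; \tor_i^B(\frakb,\Bbbk),
\]
which gives the desired isomorphism. The argument is entirely formal, and there is no real obstacle: the only verification needed is the regularity of $z$ on $\frakb$, which is immediate from $\frakb \subseteq B$.
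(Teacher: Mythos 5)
Your proposal is correct and follows essentially the same route as the paper's proof: both compute $\tor_i^B(\frakb, B/zB)$ from the two-term resolution of $B/zB$, use that $z$ is a nonzerodivisor on $\frakb$ to conclude this Tor vanishes in positive degrees, deduce that $F_\bullet \otimes_B A$ is a graded $A$-free resolution of $\frakb/z\frakb$, and then identify $F_\bullet \otimes_B \Bbbk$ with $(F_\bullet \otimes_B A)\otimes_A \Bbbk$. No gaps.
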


\begin{proof}
Let $F_\bullet$ be a 
graded free $B$-resolution of $\frakb$. We can compute the Tor modules
$\tor_i^B(\frakb, B/(zB))$ either as the homology of 
$F_\bullet \otimes_B B/(zB)$ or as the homology of 
\[
(0 \to B(-1) \stackrel{z}{\to} B \to 0) \otimes_B \frakb \qquad =\qquad  
(0 \to \frakb(-1) \stackrel{z}{\to} \frakb \to 0).
\]
Notice that $z$ is a non-zerodivisor on $\frakb$, so 
$\tor_i^B(\frakb, B/(zB)) = 0$ for all $i > 0$, i.e, 
$F_\bullet \otimes_B B/(zB)$ is a graded $(B/(zB))$-free resolution of
$\frakb/z\frakb$. Now,
$\tor_i^B(\frakb, \Bbbk) 
= \homology_i(F_\bullet \otimes_B \Bbbk)  
= \homology_i((F_\bullet \otimes_B B/(zB)) \otimes_{B/(zB)} \Bbbk)  
= \tor_i^{B/(z)}(\frakb/z\frakb, \Bbbk) \simeq
\tor_i^{A}(\frakb/z\frakb, \Bbbk)$.
\end{proof}

\begin{proof}[Proof of Theorem~\ref{theorem:bettiNumbers}]
We will work with $B$-ideals. 
Let $J$ be a $B$-ideal containing $\mathfrak a B + (z^t)$ such that $JS$ is
$z$-stable.
Write $L$ for the preimage 
in $B$ of the $S$-ideal
$\epsilon_S(H_{JS})$. We need to show that
$ H_{\tor_{i}^B(J, \Bbbk)} \preccurlyeq H_{\tor_{i}^B(L, \Bbbk)} $.
By Lemma~\ref{lemma:TorRingChange},
it suffices to show that 
$ H_{\tor_{i}^A(J/zJ, \Bbbk)} \preccurlyeq H_{\tor_{i}^A(L/zL, \Bbbk)} 
$.
Write 
$J=\bigoplus_{i\in \naturals}  J_{\langle i \rangle}z^i$ and
$L=\bigoplus_{i\in \naturals}  L_{\langle i \rangle}z^i$ as $A$-modules.
Let $J'$ be the preimage of the $S$-ideal 
$\bigoplus_{i\in \naturals}  \epsilon_R(H_{J_{\langle i \rangle}R})z^i$.

Define graded $A$-modules 
\[
\begin{split}
M_1 & = \bigoplus_{j \in \frakI} (J_{\langle j \rangle}/J_{\langle {j-1} \rangle})(-j) \qquad\text{and}\\
M_2 & =
\begin{cases}
A/J_{\langle {t-1} \rangle}(-t), & \text{if}\; t \in \naturals, \\
0, & \text{if}\; t = \infty.
\end{cases}
\end{split}
\]
Then, as an $A$-module, $J/zJ = J_{\langle 0 \rangle} \oplus M_1 \oplus
M_2$. Similarly define $A$-modules $M'_1$ and $M'_2$ for $J'$ and
$N_1$ and $N_2$ for ${L}$. 
Notice that $\frakm_AM_1 = \frakm_AM'_1 = \frakm_AN_1 = 
0$, since $JS$, $J'S$ and ${LS}$ are $z$-stable, by hypothesis, by
Remark~\ref{remarkbox:EmbedIndividualPieces} and by 
Remark~\ref{remark:propertiesEX} respectively. (Here $\frakm_A$ is the
homogeneous maximal ideal of $A$.)
Therefore $M_1$, $M'_1$ and $N_1$ are (as $A$-modules) direct sums of
shifted copies of $\Bbbk$, so
\begin{equation}
\label{equation:TorOfVectorSpaces}
H_{\tor_{i}^A(M_1, \Bbbk)} = H_{M_1}H_{\tor^{A}_i(\Bbbk,\Bbbk)},\;
H_{\tor_{i}^A(M'_1, \Bbbk)} = H_{M'_1}H_{\tor^{A}_i(\Bbbk,\Bbbk)}
\;\text{and} \;
H_{\tor_{i}^A(N_1, \Bbbk)} = H_{N_1}H_{\tor^{A}_i(\Bbbk,\Bbbk)}.
\end{equation}
Moreover,
\begin{equation}
\label{equation:BettiTableDecompOneLessVar}
H_{\tor_{i}^A(J/zJ, \Bbbk)} = 
H_{\tor_{i}^A(J_{\langle 0 \rangle}, \Bbbk)} + 
H_{\tor_{i}^A(M_1, \Bbbk)} + H_{\tor_{i}^A(M_2, \Bbbk)}.
\end{equation}
Similar expressions exist for $J'$ and $L$.
By the hypothesis and the fact that $H_{M_1} = H_{M'_1}$, we see that 
$H_{\tor_{i}^A(J/zJ, \Bbbk)} \preccurlyeq H_{\tor_{i}^A(J'/zJ', \Bbbk)}$.
Therefore we may replace $J$ by $J'$ and assume that 
$J_{\langle i \rangle}R$ is in the image of $\epsilon_R$ for all
$i \in \naturals$.
Hence, by Theorem~\ref{theorem:extnRings}\eqref{theorem:strongHyp}, 
$L_{\langle 0 \rangle}  \subseteq J_{\langle 0 \rangle}$, and, 
if $t < \infty$ then
$J_{\langle t-1 \rangle}  \subseteq L_{\langle t-1 \rangle}$.
Hence, for all values of $t$, there is a surjective $A$-homomorphism $M_2
\to N_2$. 
Since $H_J =
H_{{L}}$, we see that
\begin{equation}
\label{equation:HSsplitsForIandIL}
H_{J_{\langle 0 \rangle}}+H_{M_1}+H_{M_2} = H_{J/zJ} = 
H_{L/zL} = 
H_{L_{\langle 0 \rangle}} + H_{N_1} + H_{N_2}.
\end{equation}
Therefore
\[
\begin{split}
H_{\tor_{i}^A(J/zJ, \Bbbk)} & = 
H_{\tor_{i}^A(J_{\langle 0 \rangle}, \Bbbk)} + 
H_{\tor_{i}^A(M_1, \Bbbk)} + 
H_{\tor_{i}^A(M_2, \Bbbk)} \\
& \preccurlyeq 
H_{\tor_{i}^A(L_{\langle 0 \rangle}, \Bbbk)} + 
H_{\tor_{i}^A(M_1, \Bbbk)} +
H_{\tor_{i}^A(M_2, \Bbbk)} +
(H_{J_{\langle 0 \rangle}}-H_{L_{\langle 0 \rangle}})
H_{\tor^{A}_i(\Bbbk,\Bbbk)}\\
& \preccurlyeq 
H_{\tor_{i}^A(L_{\langle 0 \rangle}, \Bbbk)} + 
H_{\tor_{i}^A(M_2, \Bbbk)} +
(H_{J_{\langle 0 \rangle}}-H_{L_{\langle 0 \rangle}}+H_{M_1})
H_{\tor^{A}_i(\Bbbk,\Bbbk)}\\
& =
H_{\tor_{i}^A(L_{\langle 0 \rangle}, \Bbbk)} + 
H_{\tor_{i}^A(M_2, \Bbbk)} +
(H_{N_1}+H_{N_2}-H_{M_2}) H_{\tor^{A}_i(\Bbbk,\Bbbk)}\\
& \preccurlyeq 
H_{\tor_{i}^A(L_{\langle 0 \rangle}, \Bbbk)} + 
H_{\tor_{i}^A(N_2, \Bbbk)} +
H_{N_1} H_{\tor^{A}_i(\Bbbk,\Bbbk)}\\
& = H_{\tor_{i}^A(L_{\langle 0 \rangle}, \Bbbk)} + 
H_{\tor_{i}^A(N_1, \Bbbk)} + H_{\tor_{i}^A(N_2, \Bbbk)}  \\
& = H_{\tor_{i}^A(L/zL, \Bbbk)}
\end{split}
\]
where the second line uses Lemma~\ref{lemma:torAndHilbSer},
the third line uses~\eqref{equation:TorOfVectorSpaces},
the fourth line uses~\eqref{equation:HSsplitsForIandIL},
the fifth line uses Lemma~\ref{lemma:torAndHilbSer}
and the next line 
uses~\eqref{equation:TorOfVectorSpaces}.
\end{proof}

The proof shows that, for fixed $i$ and $j$,
if $\beta_{i,j}^A(R/I) \leq \beta_{i,j}^A(R/\epsilon_R(H_{I}))$ 
and $\beta_{i-1,j}^A(R/I) \leq \beta_{i-1,j}^A(R/\epsilon_R(H_{I}))$ 
for all $R$-ideals $I$, then 
for all $z$-stable $S$-ideals $J$, 
$\beta_{i,j}^B(S/J) \leq \beta_{i,j}^B(S/\epsilon_S(H_{J}))$.

Now, as a corollary, we give a quick proof of a theorem of
Mermin-Peeva-Stillman and its generalization by Murai. 
First, we relabel $z$ as $x_{n+1}$ and
write $B = \Bbbk[x_1, \ldots, x_{n+1}]$.
We say that a monomial $B$-ideal
$J$ is  \define{strongly stable} if for all monomials $m \in I$
and for all $2 \leq j \leq n+1$  such that $x_j$ divides $m$, $x_i(m/x_j)
\in I$, for all $i \leq j$.

\begin{corollary}[\cite {MPSSquares08}*{Theorem~3.5}, 
\cite {MuraiBPP08}*{Theorem~1.1}]
\label{corollary:MPSMurai}
Let $J$ be a strongly stable monomial $B$-ideal. 
Let $\frakb = (x_1^{e_1}, \ldots, x_{n+1}^{e_{n+1}})$, with $2 \leq
e_1 \leq \cdots \leq e_{n+1} \leq \infty$.
Then 
\begin{asparaenum}
\item \label{enum:MPSMuraiBettiNosIndepChar}
For all $i,j$, the value of $\beta^B_{i,j}({J+\frakb})$ does not depend on
$\Bbbk$.
\item \label{enum:MPSMuraiBettiNoInequality}
For all $i,j$, 
$\beta^B_{i,j}({J+\frakb}) \leq \beta^B_{i,j}({L+\frakb})$ 
where $L$ is the lex-segment $B$-ideal such that 
$H_{L + \frakb} = H_{J+\frakb}$.
\end{asparaenum}
\end{corollary}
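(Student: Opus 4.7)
The plan is to induct on $n+1$, using Theorem~\ref{theorem:bettiNumbers} as the engine. Set $A_k = \Bbbk[x_1,\ldots,x_k]$, $\fraka_k = (x_1^{e_1},\ldots,x_k^{e_k})$, $R_k = A_k/\fraka_k$, so that $R_{n+1}=B/\frakb$; by Remark~\ref{remark:embInheritsCL} each $\hilbertPoset_{R_k}$ admits the Clements--Lindstr\"om embedding $\epsilon_{R_k}$ given by lex-segment $A_k$-ideals. I will carry the strengthened inductive statement: \emph{for every $R_k$-ideal $I$ and all $i,j$, $\beta^{A_k}_{i,j}(R_k/I)\leq\beta^{A_k}_{i,j}(R_k/\epsilon_{R_k}(H_I))$.} Part~\eqref{enum:MPSMuraiBettiNoInequality} falls out by specializing to $I=JR_{n+1}$, because strong stability of $J$ forces $JR_{n+1}$ to be $z$-stable (if $x_{n+1}\mid m \in J$ and $j \leq n$, strong stability gives $x_j(m/x_{n+1})\in J$), and the Clements--Lindstr\"om description of $\epsilon_{R_{n+1}}$ yields $\epsilon_{R_{n+1}}(H_{JR_{n+1}})=LR_{n+1}$.

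For the inductive step $k\leadsto k+1$, apply Theorem~\ref{theorem:bettiNumbers} with $R=R_k$ and $S=R_{k+1}=R_k[z]/(z^{e_{k+1}})$: the inductive hypothesis is precisely the Betti-number hypothesis of that theorem, and its conclusion yields the desired inequality over $S$ for every $z$-stable $S$-ideal. The paragraph immediately after Theorem~\ref{theorem:bettiNumbers} upgrades this to arbitrary $S$-ideals: under the Clements--Lindstr\"om setup $\hilbertPoset_S^{\mathrm{stab}}=\hilbertPoset_S$, and the distraction-and-initial-ideal construction of \cite{CaKuEmbeddings10}*{Lemmas~4.1 and~4.2}, combined with upper semi-continuity of graded Betti numbers in flat families, produces for each $S$-ideal $J'$ a $z$-stable $J''$ with $H_{J''}=H_{J'}$ and $\beta^B_{i,j}(S/J')\leq\beta^B_{i,j}(S/J'')$. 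This closes the induction on part~\eqref{enum:MPSMuraiBettiNoInequality}.

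For part~\eqref{enum:MPSMuraiBettiNosIndepChar}, my plan is to extract characteristic independence by tracking, in the proof of Theorem~\ref{theorem:bettiNumbers}, when the successive $\preccurlyeq$-inequalities collapse to equalities. When $J$ is strongly stable, each slice $(J+\frakb)_{\langle j\rangle}$ is itself a strongly stable $A_n$-ideal containing $\fraka_n$, so by induction on $n$ (part~\eqref{enum:MPSMuraiBettiNosIndepChar}) the graded Betti numbers of $A_n/(J+\frakb)_{\langle j\rangle}$ are already characteristic-independent. Lemma~\ref{lemma:TorRingChange} lets us transfer the $B$-Betti numbers of $B/(J+\frakb)$ to $A_n$-Tors of the decomposition $(J+\frakb)/z(J+\frakb)=J_{\langle 0\rangle}\oplus M_1\oplus M_2$ used in the proof of Theorem~\ref{theorem:bettiNumbers}; granted the equalities, the total Betti numbers of $B/(J+\frakb)$ become a fixed $\ints$-linear combination of characteristic-independent quantities (Tors of the slices) and combinatorial ranks coming from Hilbert-series data, hence are themselves characteristic-independent.

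The main obstacle, which is the ``key intricate step'' flagged in Section~\ref{sec:intro}, is establishing that every $\preccurlyeq$ in the proof of Theorem~\ref{theorem:bettiNumbers} is in fact an equality when $J$ is strongly stable --- equivalently, that the iterated mapping cone built by glueing minimal resolutions of the slices $J_{\langle j\rangle}$ (via the natural surjection $M_2\to N_2$ and the inclusion $L_{\langle 0\rangle}\subseteq J_{\langle 0\rangle}$) is itself minimal. I expect this to reduce to Eliahou--Kervaire-style combinatorial bookkeeping on the generators of the strongly stable $J_{\langle j\rangle}$, verifying that no Koszul-type cancellation between slices occurs in the mapping cone.
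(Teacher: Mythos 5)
Your part~\eqref{enum:MPSMuraiBettiNoInequality} is essentially the paper's own argument: run up the Clements--Lindstr\"om tower, verify the hypothesis of Theorem~\ref{theorem:bettiNumbers} at each stage by induction on the number of variables, use the paragraph following that theorem (distraction/initial ideals plus upper semi-continuity) to pass from $z$-stable to arbitrary ideals so that the induction closes, and observe that strong stability of $J$ makes $JS$ $z$-stable with $\epsilon_S(H_{JS})=LS$. That half is correct.

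Part~\eqref{enum:MPSMuraiBettiNosIndepChar} has a genuine gap, and the obstacle you flag is not the right one. You leave unproven the claim that ``every $\preccurlyeq$ in the proof of Theorem~\ref{theorem:bettiNumbers} is in fact an equality when $J$ is strongly stable,'' proposing to settle it by Eliahou--Kervaire-style bookkeeping. Those inequalities, however, compare $J$ with its lex counterpart $L$; they have nothing to do with characteristic independence, they need not become equalities (if they all did, $J+\frakb$ and $L+\frakb$ would have equal Betti tables, which is false in general), and you do not need them. The only identity required is \eqref{equation:BettiTableDecompOneLessVar}, and it is an exact equality for free: after Lemma~\ref{lemma:TorRingChange}, one computes $\tor^A_i$ of $(J+\frakb)/z(J+\frakb)$, which splits as a \emph{direct sum} of $A$-modules $J_{\langle 0\rangle}\oplus M_1\oplus M_2$, so Tor is additive across it with no mapping-cone minimality to verify. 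Now $H_{\tor_i^A(M_1,\Bbbk)}=H_{M_1}H_{\tor_i^A(\Bbbk,\Bbbk)}$ because $\frakm_A M_1=0$ ($z$-stability), i.e.\ Koszul data determined by the characteristic-free Hilbert function of a monomial ideal, while $J_{\langle 0\rangle}$ and $J_{\langle t-1\rangle}$ are strongly-stable-plus-powers in one fewer variable, so induction on $n$ handles $\tor_i^A(J_{\langle 0\rangle},\Bbbk)$ and $\tor_i^A(M_2,\Bbbk)$. The minimality of the iterated mapping cone is a \emph{consequence} of this additivity (via Observation~\ref{observationbox:MC}), established in the paper's later subsection; it is not an input you must check first. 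As written, your part~\eqref{enum:MPSMuraiBettiNosIndepChar} rests on an ``expected'' verification that is both unnecessary and, in the form stated, false.
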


\begin{proof}
\underline{\eqref{enum:MPSMuraiBettiNosIndepChar}}: We may compute the
Betti numbers using Lemma~\ref{lemma:TorRingChange} 
and~\eqref{equation:BettiTableDecompOneLessVar}. Note that
$J_{\langle 0 \rangle}$ and 
$J_{\langle e_n-1 \rangle}$ are 
strongly-stable-plus-$(x_1^{e_1}, \ldots, x_n^{e_n})$. The assertion now
follows by induction.
\underline{\eqref{enum:MPSMuraiBettiNoInequality}}: 
From Remark~\ref{remark:embInheritsCL}, whose notation we adopt, we 
have embeddings $\epsilon_R$ and $\epsilon_S$ of $\hilbertPoset_R$ and
$\hilbertPoset_S^{\mathrm{stab}} = \hilbertPoset_S$
respectively. By induction on the number of variables, we see that the
hypothesis of Theorem~\ref{theorem:bettiNumbers} is satisfied. 
Now $\epsilon_S(H_{JS}) = LS$,
so Theorem~\ref{theorem:bettiNumbers} completes the proof.
\end{proof}

Starting with a ring whose poset of Hilbert functions admits an embedding,
it is possible to construct new examples of rings with the same property,
by using Theorem~\ref{theorem:extnRings} and
Remark~\ref{remark:embInherits} recursively. We thus recover the following
result of D.~A.~Shakin.

\begin{corollary}[\cite{ShakPieceLex03}*{Theorems~3.10 and~4.1}]
Let $\fraka_i$ be a lex-segment ideal in the ring
$\Bbbk[x_1,\dots, x_i]$, $i=1\leq n$, then let $\fraka = \sum_{i=1}^n
\fraka_iA$, with $A=\Bbbk[x_1,\dots, x_n]$. Let $R=A/\fraka$.
Then $\hilbertPoset_R$ admits an embedding $\epsilon_R$ induced by the
lexicographic order on $A$.
Moreover $\beta_{i,j}^A(R/I) \leq \beta_{i,j}^A(R/\epsilon_R(H_I))$,
for all $R$-ideals $I$ and for all $i,j$.
\end{corollary}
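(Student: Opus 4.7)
My plan is to prove the corollary by induction on $n$, feeding the output of each stage into Theorem~\ref{theorem:extnRings}, Theorem~\ref{theorem:bettiNumbers}, and Remark~\ref{remark:embInherits}. The base case $n=0$ (so $R=\Bbbk$) is trivial since $\hilbertPoset_R$ has a unique element.

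For the inductive step, set $A_{n-1} = \Bbbk[x_1, \ldots, x_{n-1}]$ and $R_{n-1} = A_{n-1}/\sum_{i=1}^{n-1} \fraka_i A_{n-1}$. By the inductive hypothesis, $\hilbertPoset_{R_{n-1}}$ admits an embedding $\epsilon_{R_{n-1}}$ induced by lex-segments of $A_{n-1}$, and the Betti inequality holds for $R_{n-1}$. First I would apply Theorem~\ref{theorem:extnRings} with $t = \infty$ to the polynomial extension $R_{n-1}[x_n]$. Since $\hilbertPoset_{R_{n-1}[x_n]}^{\mathrm{stab}} = \hilbertPoset_{R_{n-1}[x_n]}$ by the paragraph following that theorem, one obtains an embedding $\epsilon_{R_{n-1}[x_n]}$, which by Remark~\ref{remark:embInherits} (applied with $J=0$) is induced by lex-segments of $A = A_{n-1}[x_n]$. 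Theorem~\ref{theorem:bettiNumbers}, combined with the discussion immediately following it, then yields the inequality $\beta^A_{i,j}(R_{n-1}[x_n]/\tilde I) \leq \beta^A_{i,j}(R_{n-1}[x_n]/\epsilon_{R_{n-1}[x_n]}(H_{\tilde I}))$ for every ideal $\tilde I$ of $R_{n-1}[x_n]$.

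Next, since $\fraka_n$ is a lex-segment in $A$, the ideal $\fraka_n R_{n-1}[x_n]$ lies in the image of $\epsilon_{R_{n-1}[x_n]}$. Using $R = R_{n-1}[x_n]/\fraka_n R_{n-1}[x_n]$, a second application of Remark~\ref{remark:embInherits} produces the required embedding $\epsilon_R$, still induced by lex-segments of $A$. The Betti inequality for $R$ descends trivially: for an $R$-ideal $I$ with preimage $\tilde I$ in $R_{n-1}[x_n]$, one has $R/I \simeq R_{n-1}[x_n]/\tilde I$ as $A$-modules and $\epsilon_R(H_I)$ corresponds to $\epsilon_{R_{n-1}[x_n]}(H_{\tilde I})$ under this identification, so the $R_{n-1}[x_n]$ inequality is exactly what is wanted.

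The argument poses no real obstacle; it is a direct recursive application of the two main theorems of the paper together with Remark~\ref{remark:embInherits}. The only subtlety to watch is bookkeeping: at each inductive stage one must verify that the embedding just constructed is \emph{genuinely} induced by lex-segments of the relevant polynomial ring, both so that $\fraka_n R_{n-1}[x_n]$ can be recognized as lying in the image of $\epsilon_{R_{n-1}[x_n]}$ and so that the resulting $\epsilon_R$ inherits the lex-segment description needed for the next stage of the induction.
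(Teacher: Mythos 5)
Your argument is correct and is essentially the paper's own (the paper gives no detailed proof, stating only that the corollary follows by applying Theorem~\ref{theorem:extnRings}, Theorem~\ref{theorem:bettiNumbers} and Remark~\ref{remark:embInherits} recursively, which is exactly your induction: adjoin $x_n$ with $t=\infty$, transfer the embedding and the Betti inequality, then pass to the quotient by the embedded ideal $\fraka_n R_{n-1}[x_n]$). Your closing remark about verifying at each stage that the embedding is genuinely induced by lex-segments is precisely the bookkeeping the paper delegates to Remark~\ref{remark:embInherits}.
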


\subsection*{Consecutive cancellation in Betti tables}
We say that a Betti table $\beta^R(M)$ is
obtained by \define{consecutive cancellations} from $\beta^R(N)$ if there
exists a collection $\Lambda$ of triples $(i,j, n_{i,j}) \in \naturals
\times \ints \times \naturals$ such that
$\beta^R(N) = 
\beta^R(M) + \sum_{\Lambda} n_{i,j}(\bfe_{i,j} + \mathbf{e}_{i+1,j})$. 
(Note that $\mathbf{e}_{i,j} + \mathbf{e}_{i+1,j}$ is the Betti table of a
complex $R(-j) \xrightarrow{1} R(-j)$ 
concentrated in homological degrees $i+1$ and $i$.) Now revert 
to the
situation of Theorem~\ref{theorem:bettiNumbers}.  We have the following:

\begin{proposition}
\label{proposition:consecCancel}
If $\beta^A(R/I)$ can be obtained from $\beta^A(R/\epsilon_R(H_I))$ by
consecutive cancellations for all $R$-ideals $I$, then $\beta^B(S/J)$ can be
obtained from $\beta^B(S/\epsilon_S(H_J))$ by consecutive cancellations for
all $z$-stable $S$-ideals $J$.
\end{proposition}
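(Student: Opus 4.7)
The plan is to mirror the proof of Theorem~\ref{theorem:bettiNumbers}, upgrading each termwise inequality $\preccurlyeq$ to the stronger statement that one Betti table is obtained from another by consecutive cancellations. By Lemma~\ref{lemma:TorRingChange} (together with the trivial homological shift relating the Betti table of a $B$-ideal to that of its quotient in $B$), it suffices to show that $\beta^A(J/zJ)$ is obtained from $\beta^A(L/zL)$ by consecutive cancellations, where $J$ and $L$ are the $B$-ideal preimages of an arbitrary $z$-stable $S$-ideal and of its image under $\epsilon_S$, respectively.

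Two elementary observations drive the upgrade. First, for any short exact sequence $0 \to M_1 \to M_2 \to M_3 \to 0$ of finitely generated graded $A$-modules, splitting the long exact sequence in Tor into short exact pieces yields
\[
\beta^A(M_1) + \beta^A(M_3) - \beta^A(M_2) = \sum_{i,j} r_{i,j}(\bfe_{i,j} + \bfe_{i+1,j}),
\]
where $r_{i,j}$ is the $\Bbbk$-rank of the connecting map $\tor_{i+1}^A(M_3,\Bbbk)_j \to \tor_i^A(M_1,\Bbbk)_j$; consequently $\beta^A(M_2)$ is obtained from $\beta^A(M_1) + \beta^A(M_3)$ by consecutive cancellations. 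Second, tensoring the Koszul complex of $A$ with any finitely generated graded $A$-module $X$ produces a bounded complex of graded $\Bbbk$-vector spaces whose termwise Betti-table sum is $H_X \cdot \beta^A(\Bbbk)$ and whose homology is $\tor^A_*(X,\Bbbk)$; the same cycle-and-boundary count shows that $\beta^A(X)$ is obtained from $H_X \cdot \beta^A(\Bbbk)$ by consecutive cancellations. This is the consecutive-cancellation analogue of Lemma~\ref{lemma:torAndHilbSer}.

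With these in hand, I would repeat the argument of Theorem~\ref{theorem:bettiNumbers}. Decompose $J/zJ = J_{\langle 0 \rangle} \oplus M_1 \oplus M_2$, and analogously for $J'$ and $L$, with $J'$ the replacement ideal constructed in that proof. Since $\frakm_A$ annihilates $M_1$, $M'_1$ and $N_1$, one has $\beta^A(M_1) = \beta^A(M'_1) = H_{M_1} \cdot \beta^A(\Bbbk)$ and $\beta^A(N_1) = H_{N_1} \cdot \beta^A(\Bbbk)$. Applying the hypothesis to the $R$-ideals $J_{\langle 0\rangle}R$ and $J_{\langle t-1\rangle}R$ (after the standard shift between $R/I$ and $I$) shows that $\beta^A(J/zJ)$ is obtained from $\beta^A(J'/zJ')$ by consecutive cancellations. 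For the step from $J'$ to $L$, the inclusions $L_{\langle 0\rangle} \subseteq J'_{\langle 0\rangle}$ and (when $t < \infty$) $J'_{\langle t-1\rangle} \subseteq L_{\langle t-1\rangle}$ coming from Theorem~\ref{theorem:extnRings}\eqref{theorem:strongHyp} yield short exact sequences $0 \to L_{\langle 0\rangle} \to J'_{\langle 0\rangle} \to C \to 0$ and (if $t$ is finite) $0 \to D(-t) \to M'_2 \to N_2 \to 0$, where $C$ and $D$ are the respective cokernels. The first observation converts these into consecutive cancellations, and the Koszul observation applied to $C$ and $D(-t)$, together with the Hilbert-series identity $H_C + H_{M'_1} + H_{D(-t)} = H_{N_1}$ (which reads off from $H_{J'/zJ'} = H_{L/zL}$), shows that $\beta^A(C) + \beta^A(M'_1) + \beta^A(D(-t))$ is obtained from $H_{N_1} \cdot \beta^A(\Bbbk) = \beta^A(N_1)$ by consecutive cancellations. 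Since consecutive cancellations are additive and transitive, assembling these pieces yields the desired relation between $\beta^A(J/zJ)$ and $\beta^A(L/zL)$.

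The main obstacle will be articulating the Koszul-complex observation as the correct replacement for Lemma~\ref{lemma:torAndHilbSer}: it is what allows the non-vector-space contributions $\beta^A(C)$ and $\beta^A(D(-t))$, together with the vector-space piece $M'_1$ on the $J'$-side, to be absorbed cleanly into the vector-space piece $N_1$ on the $L$-side through honest consecutive cancellations rather than through a mere termwise majorisation.
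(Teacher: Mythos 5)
Your proposal is correct and follows essentially the same route as the paper: your two observations are precisely the two halves of the paper's Lemma~\ref{lemma:SESConsecCanc} (that $\beta^A(M)$ is obtained from $\beta^A(N)+(H_M-H_N)\beta^A(\Bbbk)$ by consecutive cancellations when $N$ is a graded submodule or quotient of $M$), and your assembly of the pieces matches the paper's displayed computation of $\beta^B(L)-\beta^B(J)$. The only cosmetic difference is that the paper derives the short-exact-sequence cancellation from non-minimal free resolutions via \cite{eiscommalg}*{Theorem~20.2} rather than from ranks of connecting maps in the long exact Tor sequence.
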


\begin{proof}
As in the proof of Theorem~\ref{theorem:bettiNumbers}, which we follow
closely, we work with
$B$-ideals.
Let $J$ be a $B$-ideal containing $\mathfrak a B + (z^t)$ such that $JS$ is
$z$-stable.
Write $L$ for the preimage of the $S$-ideal
$\epsilon_S(H_{JS})$. 
Let $J'$ be the preimage of the $S$-ideal 
$\bigoplus_{i\in \naturals}  \epsilon_R(H_{J_{\langle i \rangle}R})z^i$,
as in the proof of the theorem.
We need to show that
$\beta^B(J)$ can be obtained from $\beta^B(L)$ by consecutive cancellations.
Note that, by our hypothesis, $\beta^B(J)$ can be obtained from
$\beta^B(J')$ by consecutive cancellations, since these Betti tables are
equal to $\beta^A(J/zJ)$ and $\beta^A(J'/zJ')$ respectively.
Therefore, we may replace $J$ by $J'$ and assume that 
$J_{\langle i \rangle}R$ is in the image of $\epsilon_R$ for all $i \in
\naturals$.
Then $L_{\langle 0 \rangle} \subseteq J_{\langle 0 \rangle}$, $N_2$ is a
graded homomorphic image of $M_2$ (for a morphism of degree zero) and
$H_{N_1} - H_{M_1} = 
H_{J_{\langle 0 \rangle}}-H_{L_{\langle 0 \rangle}}+H_{M_2}-H_{N_2}$. 
Hence we may place ourselves in the context of
Lemma~\ref{lemma:torAndHilbSer}. We have
\begin{align*}
\beta^B(L)-\beta^B(J)
& = \beta^A(L_{\langle 0\rangle})-\beta^A(J_{\langle 0\rangle}) +
   \beta^A(N_2)- \beta^A(M_2)+\beta^A(N_1)-\beta^A(M_1) \\
&= \beta^A(L_{\langle 0\rangle}) - \beta^A(J_{\langle 0\rangle}) +
   \beta^A(N_2) - \beta^A(M_2)+(H_{N_1}-H_{M_1})\beta^A(k) \\
& = [\beta^A(L_{\langle 0\rangle}) - \beta^A(J_{\langle 0\rangle}) + 
 (H_{J_{\langle 0\rangle}} - H_{L_{\langle 0\rangle}})\beta^A(k)]   \\
& \qquad + [\beta^A(N_2)-\beta^A(M_2)+(H_{M_2}-H_{N_2})\beta^A(k)].
\end{align*}
Lemma~\ref{lemma:SESConsecCanc} below,
now, completes the proof of the proposition.
\end{proof}

\begin{lemma}
\label{lemma:SESConsecCanc}
Let $A$ be a standard-graded polynomial ring. Let $M$ and $N$ be finitely
generated graded $A$-modules. If $N$ is a graded $A$-submodule of $M$ or
a graded homomorphic image of $M$, then $\beta^A(M)$ can be obtained from
$\beta^A(N) + (H_M-H_N) \beta^A(\Bbbk)$ by consecutive cancellations. Here,
for  a series $h=\sum_{i \in \naturals}h_i\hssymb^i$, 
we mean, by
$h\beta^A(\Bbbk)$, the (infinite) Betti table $\sum_{i \in \naturals} 
\beta^A(\Bbbk(-i)^{\oplus h_i})$.
\end{lemma}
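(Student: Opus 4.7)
The two hypotheses are symmetric: in either case there is a short exact sequence of graded $A$-modules $0 \to P \to M \to Q \to 0$ with $\{P, Q\} = \{N, L\}$, where $L$ is the ``third'' module ($L = M/N$ in the submodule case, $L = \ker(M \to N)$ in the quotient case); in both situations $H_L = H_M - H_N$. The task is to express
\[
\beta^A(N) + H_L \beta^A(\Bbbk) - \beta^A(M)
\]
as a non-negative integer combination of pairs $\mathbf{e}_{i,j} + \mathbf{e}_{i+1,j}$. My plan is to split this difference as $\bigl(\beta^A(N) + \beta^A(L) - \beta^A(M)\bigr) + \bigl(H_L \beta^A(\Bbbk) - \beta^A(L)\bigr)$ and prove each summand separately admits such a decomposition.

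For the first summand I would invoke the long exact sequence of $\tor^A(-,\Bbbk)$ associated to the short exact sequence above. Applying rank-nullity at each $\tor^A_i(M,\Bbbk)_j$ gives
\[
\beta^A_{i,j}(N) + \beta^A_{i,j}(L) - \beta^A_{i,j}(M) = \rank(\delta_{i,j}) + \rank(\delta_{i+1,j}),
\]
where $\delta_{i,j}$ is the degree-$j$ piece of the connecting map from $\tor^A_i$ of one outer module to $\tor^A_{i-1}$ of the other. Each unit of rank of such a $\delta_{i,j}$ removes one class in bidegree $(i, j)$ of one of $N, L$ and one class in bidegree $(i-1, j)$ of the other, contributing precisely one consecutive cancellation pair of the form $\mathbf{e}_{i-1,j} + \mathbf{e}_{i,j}$; summing over $i, j$ expresses the first summand as a non-negative combination of such pairs.

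For the second summand I would use the Koszul complex $K_\bullet$ of $\Bbbk$ over $A$, a minimal graded free resolution. The tensored complex $K_\bullet(L) := K_\bullet \otimes_A L$ has homology $\tor^A_\bullet(L, \Bbbk)$, and the Hilbert series of $K_i(L)$ is exactly row $i$ of $H_L \beta^A(\Bbbk)$. Let $b_i$ denote the Hilbert series of the image of $d_i \colon K_i(L) \to K_{i-1}(L)$; since $d_i$ is a graded degree-zero homomorphism, $b_i$ has non-negative integer coefficients. The standard decomposition $\dim K_i(L) = \dim \tor^A_i(L,\Bbbk) + \dim\mathrm{image}(d_{i+1}) + \dim\mathrm{image}(d_i)$ becomes, in Hilbert series,
\[
H_{K_i(L)} - H_{\tor^A_i(L, \Bbbk)} = b_{i+1} + b_i.
\]
So row $i$ of $H_L \beta^A(\Bbbk) - \beta^A(L)$ equals $b_{i+1} + b_i$, and assigning to each $z^j$-coefficient of $b_{i+1}$ one copy of the pair $\mathbf{e}_{i,j} + \mathbf{e}_{i+1,j}$ reproduces this difference row by row.

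Summing the two non-negative combinations (after padding the Koszul one by $\beta^A(N)$) yields the required expression of $\beta^A(N) + H_L \beta^A(\Bbbk) - \beta^A(M)$. The main obstacle I would anticipate is purely bookkeeping in the Koszul step when $L$ has positive Krull dimension, so that $H_L$ is a formal power series and $H_L \beta^A(\Bbbk)$ is an ``infinite'' Betti table; but everything is controlled row by row, the $b_i$ remain honest non-negative Hilbert series of graded $\Bbbk$-vector spaces, and the passage to consecutive cancellation pairs causes no convergence issue.
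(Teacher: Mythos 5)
Your proof is correct, and its overall architecture coincides with the paper's: the same splitting of $\beta^A(N)+(H_M-H_N)\beta^A(\Bbbk)-\beta^A(M)$ into $\bigl(\beta^A(N)+\beta^A(L)-\beta^A(M)\bigr)$ plus $\bigl(H_L\beta^A(\Bbbk)-\beta^A(L)\bigr)$, and an identical Koszul-complex computation for the second piece (the paper likewise observes that $H_{K_i\otimes M}-H_{\tor_i^A(M,\Bbbk)}$ is the sum of the Hilbert series of two consecutive image modules, each subtracted once at homological degree $i$ and once at $i\pm 1$). The only genuine divergence is in the first piece: the paper builds a (possibly non-minimal) resolution of $M$ from minimal resolutions of $N$ and $M/N$ via the horseshoe construction and then quotes the splitting of non-minimal graded free resolutions into a minimal one plus trivial complexes, whereas you read off the same cancellation pairs from the ranks of the connecting homomorphisms in the long exact sequence of $\tor^A(-,\Bbbk)$. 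The two mechanisms are equivalent (the ranks of the connecting maps are exactly the ranks of the split-off trivial complexes), and your version has the mild advantage of not needing the structure theorem for non-minimal resolutions, at the cost of a little rank-nullity bookkeeping; your index check $\beta_{i,j}(P)+\beta_{i,j}(Q)-\beta_{i,j}(M)=\rank(\delta_{i+1})_j+\rank(\delta_i)_j$ is right and does yield a non-negative combination of the pairs $\bfe_{i,j}+\bfe_{i+1,j}$.
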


\begin{proof}
We will prove this when $N$ is a graded submodule of $M$; the other case is
similar. From minimal graded $A$-free resolutions of $N$ and $M/N$, we can
construct a graded $A$-free resolution of $M$ that is not necessarily
minimal. Every non-minimal graded $A$-free resolution of $M$ is a direct
sum of a minimal graded $A$-free resolution of $M$  with copies of exact
complexes of the form $0 \to A(-j) \stackrel{1}{\to} A(-j) \to 0$ that is
concentrated in homological degrees $i$ and $i+1$~\cite
{eiscommalg}*{Theorem~20.2}; therefore 
$\beta^A(M)$ can be obtained from $\beta^A(N) + \beta^A(M/N)$ 
by consecutive cancellations. Since $H_{M/N} = H_M - H_N$, it suffices to
show that, after relabelling the modules, for any graded $A$-module $M$,
$\beta^A(M)$ can be obtained from $H_M\beta^A(\Bbbk)$ by consecutive
cancellations.

Let $(K_\bullet, \partial_\bullet)$ be a Koszul complex that is a minimal
graded $A$-free resolution of $\Bbbk$. Note that 
\[
\begin{split}
H_{\tor_i^A(M,\Bbbk)} & = 
H_{\ker(\partial_i \otimes_A M)} - 
H_{\image(\partial_{i+1} \otimes_A M)} \\
& = 
H_{K_i \otimes_R M} - 
H_{\image(\partial_i \otimes_A M)} - 
H_{\image(\partial_{i+1} \otimes_A M)}.
\end{split}
\]
The series $H_{\tor_i^A(M,\Bbbk)}, 0 \leq i \leq n$ determine $\beta^A(M)$.
Similarly $H_{K_i \otimes_R M}, 0 \leq i \leq n$ determine
$H_M\beta^A(\Bbbk)$.  The lemma now follows by noting that for each $i$,  
$H_{\image(\partial_i \otimes_A M)}$ is subtracted from 
$H_M\beta^A(\Bbbk)$ twice, at $i$ and at $i-1$.
\end{proof}

\subsection*{An Eliahou-Kervaire type resolution for $z$-stable ideals}
{When $I$ is a strongly stable $A$-ideal (see the paragraph above
Corollary~\ref{corollary:MPSMurai} for definition)}, a minimal
graded $A$-free resolution of $I$ is given by the Eliahou-Kervaire 
complex (see~\cite{ElKeMinimalReslns90}*{Theorem~2.1}), which can be
constructed as an iterated mapping cone for a
specific order on the set of minimal monomial generators of $I$. Iterated
mapping cones can always be used to construct free resolutions, but
they need not be minimal in general. When they give minimal resolutions,
they can be used to obtain exact expressions for Betti numbers. See, for
instance,~\cite {MuraiBPP08}, which uses an iterated mapping cone from~\cite
{MPSSquares08}. 
{This section does not use explicit results on embeddings, but
only the calculations in the proof of Theorem~\ref{theorem:bettiNumbers}.}
We first make an observation:
\begin{observationbox}
\label{observationbox:MC}
Let $\phi : M \to N$ be an injective map of finitely generated graded
$B$-modules and let $F_\bullet$ and $G_\bullet$ be minimal graded free
$B$-resolutions of $M$ and $N$ respectively. Denote the comparison map
$F_\bullet \to G_\bullet$ by $\Phi$. Then the mapping cone $C_\bullet$ of
$\Phi$ is a graded free $B$-resolution of $\coker \phi$. Moreover if
$\rank_B C_i = \dim_\Bbbk \tor_i^B(\coker \phi, \Bbbk)$, then $C_\bullet$
is a minimal resolution.
\end{observationbox}

Suppose that $J$ is a $z$-stable $S$-ideal. We give an interpretation of a
minimal graded $B$-free resolution of $S/J$ as an iterated mapping cone.
Replace $J$ by its preimage in $B$. For $i \in \frakI$, set 
$J_{\langle i \rangle} = (J :_B z^i) \cap A$. If $t$ is finite, then, for
all $i \geq t$, set $J_{\langle i \rangle} = J_{\langle t-1 \rangle}$.
Let $J' = \bigoplus_{i \in \naturals} J_{\langle i \rangle}z^i$. Note that
if $t$ is finite then $J = (J'+(z^t))$ while if 
if $t$ is infinite then $J = J'$. Note also that $J'R[z]$ is $z$-stable.

We first construct a minimal resolution of $J'$.
Let $f_1z^{j_1}, \ldots, f_{r}z^{j_{r}}, fz^j$ be a set of minimal
multigraded generators of $J'$ ordered such that $j_1 \leq \cdots \leq
j_r \leq j$.  We may assume 
that $j > 0$, for, otherwise, a minimal $B$-free
resolution of $J'$ can be obtained by applying $-\otimes_A B$ to a 
minimal $A$-free resolution of $(J' \cap A)$. Further, 
$J_{\langle i \rangle} = J_{\langle j \rangle}$ for all $i \geq j$.
Write $d=\deg f$.
Let $J'' = (f_1z^{j_1}, \ldots, f_{r}z^{j_{r}})$. Then 
for all $0 \leq i < j$, $(J'' :_B z^i) \cap A = J_{\langle i \rangle}$, and
for all $i \geq j$, $(J'' :_B z^i) \cap A = (J'' :_B z^j) \cap A$.
Moreover, 
$(J'' :_B fz^j) \supseteq (J_{\langle j-1 \rangle} :_A f)B = \frakm_AB$, so
$(J'' :_B fz^j) = \frakm_AB$.
Also, $J_{\langle j \rangle}/((J'' :_B z^j) \cap A) \simeq \Bbbk(-d)$.
There is a graded exact sequence
\[
0 \to \frac{B}{\frakm_AB}(-d-j) \xrightarrow{fz^j} \frac{B}{J''} \to \frac{B}{J'} \to 0
\]
of $B$-modules. Arguing as in the proof of
Theorem~\ref{theorem:bettiNumbers}, we see that 
$\beta^B(J') = \beta^A(J'/zJ') = 
\beta^A(J_{\langle 0 \rangle}) + \sum_{i \leq j}
\hssymb^iH_{\frac{J_{\langle i \rangle}}{J_{\langle i-1
\rangle}}}\beta^A(\Bbbk) = 
\beta^B(J'') + \beta^A(\Bbbk(-d-j))$. By
Observation~\ref{observationbox:MC},
the mapping cone
of a comparison morphism from a minimal $B$-free resolution of 
$\frac{B}{\frakm_AB}(-d-j)$ to that of  $\frac{B}{J''}$ gives a minimal
$B$-free resolution of $\frac{B}{J'}$. 

If $t$ is infinite, then we have
by now constructed a minimal $B$-free resolution of $S/J$ as an iterated
mapping cone.

Now suppose that $t$ is finite. Then, as we noted earlier, $J = (J'+(z^t))$.
Observe that $j < t$ and hence that $(J':_B z^t) = 
J_{\langle t-1 \rangle}B$. We have a graded exact sequence
\[
0 \to \frac{B}{J_{\langle t-1 \rangle}B}(-t) \xrightarrow{z^t} 
\frac{B}{J'} \to \frac{B}{J} 
\to 0
\]
of $B$-modules. As we saw above,
$\beta^B(J') = \beta^A(J_{\langle 0 \rangle}) + \sum_{1 \leq i < t}
\hssymb^iH_{\frac{J_{\langle i \rangle}}{J_{\langle i-1
\rangle}}}\beta^A(\Bbbk)$. Since 
$\beta^B(\frac{B}{J_{\langle t-1 \rangle}B}(-t)) = 
\beta^A(\frac{A}{J_{\langle t-1 \rangle}}(-t))$, we see
from~\eqref{equation:BettiTableDecompOneLessVar} that $\beta^B(J) = 
\beta^B(J') + \beta^B(\frac{B}{J_{\langle t-1 \rangle}B}(-t))$. Again,
by Observation~\ref{observationbox:MC},
the mapping cone for the comparison map of the minimal
$B$-free resolutions of 
$\frac{B}{J_{\langle t-1 \rangle}B}(-t)$ and $\frac{B}{J'}$ gives a minimal
$B$-free resolution of $B/J$.

\section{The lex-plus-powers conjecture reduces to the Artinian
situation}

In this section, we discuss some examples of applications of
Theorem~\ref{theorem:bettiNumbers}. We begin with a recursive application
of the result. Then we use the theorem to reduce the lex-plus-powers
conjecture to the Artinian situation.

Let $\bff = f_1, \ldots, f_c$ be a homogeneous $A$-regular
sequence (where $A = \Bbbk[x_1, \ldots, x_n]$) of degrees $e_1 \leq \cdots \leq
e_c$. Let $\fraka$ be the ideal generated by $\bff$ and let $\frakb$ be the
ideal generated by $x_1^{e_1},\dots, x_c^{e_c}$. We say $\mathbf f$
\define{satisfies the Eisenbud-Green-Harris conjecture} 
if there exists an inclusion of posets
$\hilbertPoset_{A/\fraka}\subseteq \hilbertPoset_{A/\frakb}$. 
(Since $\hilbertPoset_{A/\frakb}$ admits an embedding induced by the
lexicographic order on $A$ by the Clements--{Lindstr\"om} theorem, this
definition is equivalent to the seemingly stronger condition that 
for each $H\in \hilbertPoset_{A/\fraka}$ there exists a lex-segment
$A$-ideal $L$ such that $H_{LA/\frakb} = H$.) It is known that the
Eisenbud-Green-Harris conjecture reduces to the Artinian case.
More precisely, after a linear change of
coordinates (by replacing $\Bbbk$ by a suitable extension field, if
necessary), we may assume that $f_1, \ldots, f_c, x_{c+1}, \ldots, x_{n}$
is a maximal regular sequence. Now, write
$\overline{\bff} = \overline{f}_1, \ldots, \overline{f}_c$ for the image of
$\bff$ after going modulo $x_{c+1}, \ldots, x_{n}$. Then 
$\overline{\bff}$ is a maximal regular sequence in 
$\Bbbk[x_1, \ldots, x_c]$. If $\overline{\bff}$ satisfies the
Eisenbud-Green-Harris conjecture, then so does $\bff$~\cite
{CaMaEGH08}*{Proposition~10}.

The lex-plus-powers conjecture of Evans (see~\cite {FrRiLPP07})
asserts that if $\bff$ satisfies the Eisenbud-Green-Harris conjecture, $I$
is an $A$-ideal containing $\bff$ and $L$ is the lex-segment $A$-ideal
such that $H_I = H_{LA/\frakb}$,  then $\beta_{i,j}^A(I) \leq
\beta_{i,j}^A(L+\frakb)$ for all $i,j$. 
We show now that, similarly, the lex-plus-powers conjecture reduces to the
Artinian case. We keep the notation from above, and, further
denote $L+\frakb$ by $\lpp_{\bff}(I)$.

\begin{theorem}
\label{theorem:lppReducesToArtinian}
If $\overline{\bff}$ satisfies the Eisenbud-Green-Harris and the
lex-plus-powers conjectures, then so does $\bff$.
\end{theorem}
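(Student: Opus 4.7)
\noindent\textit{Proof proposal.} My plan is to combine Theorem~\ref{theorem:bettiNumbers} iteratively with a Gr\"obner degeneration: the Artinian LPP hypothesis for $\overline{\bff}$ serves as the base case, and a weight degeneration replaces $\bff$ by $\overline{\bff}A$ inside $A$ so that the iterated Betti-number inequality applies. I first set $R = A_c/\overline{\bff}$ and, using EGH for $\overline{\bff}$, define an embedding $\epsilon_R$ of $\hilbertPoset_R$ by sending $H$ to the $R$-image of $L + \overline{\bff}$, where $L \subseteq A_c$ is the unique lex-segment such that $L + \frakb$ realizes $H$ (EGH picks this $L$ and forces $H_{L + \overline{\bff}} = H_{L + \frakb}$, so that $\epsilon_R$ has the correct Hilbert series). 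The LPP hypothesis for $\overline{\bff}$ then provides the Betti inequality $\beta^{A_c}_{i,j}(R/\bar I) \le \beta^{A_c}_{i,j}(R/\epsilon_R(H_{\bar I}))$ required by Theorem~\ref{theorem:bettiNumbers}.

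Next I apply Theorem~\ref{theorem:bettiNumbers} iteratively $n-c$ times, adjoining the variables $x_{c+1}, \ldots, x_n$ one at a time with $t = \infty$. Because $t = \infty$ implies $\hilbertPoset_S^{\mathrm{stab}} = \hilbertPoset_S$ at every step by~\cite{CaKuEmbeddings10}*{Lemma~4.1}, the Betti inequality propagates to every ideal and not only the $z$-stable ones, as explained in the discussion following Theorem~\ref{theorem:bettiNumbers}. After $n - c$ rounds I obtain $S = A/\overline{\bff}A$ with an embedding $\epsilon_S$ induced by lex-segments of $A$ (by iterated Remark~\ref{remark:embInherits}) and the inequality $\beta^A_{i,j}(S/J) \le \beta^A_{i,j}(S/\epsilon_S(H_J))$ for every $S$-ideal $J$. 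For the Gr\"obner degeneration, given $I \supseteq \bff$ I take the weight $w$ on $A$ with $w(x_i) = 0$ for $i \le c$ and $w(x_i) = 1$ for $i > c$. Writing $f_i = \overline{f}_i + g_i$ with $g_i$ a sum of terms each involving some $x_j$, $j > c$, the $w$-initial component of $f_i$ is the nonzero $\overline{f}_i$, so $I' := \In_w(I) \supseteq \overline{\bff}A$, $H_{I'} = H_I$, and $\beta^A_{i,j}(I) \le \beta^A_{i,j}(I')$ by upper-semicontinuity of graded Betti numbers along the flat family that degenerates $I$ to $I'$. Applying the extended Betti inequality to the $S$-ideal $I'/\overline{\bff}A$ then yields $\beta^A_{i,j}(I') \le \beta^A_{i,j}(L + \frakb)$, where $L \subseteq A$ is the lex-segment with $H_{L + \frakb} = H_I$ supplied by EGH for $\bff$ (itself a consequence of EGH for $\overline{\bff}$ by the cited reduction). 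Chaining $\beta^A_{i,j}(I) \le \beta^A_{i,j}(I') \le \beta^A_{i,j}(L + \frakb)$ gives LPP for $\bff$.

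The main obstacle is the base-case identification in the first paragraph: the Artinian LPP for $\overline{\bff}$ bounds $\beta^{A_c}(\bar I)$ by the Betti numbers of a lex-plus-\emph{pure-powers} ideal $L + \frakb$, whereas Theorem~\ref{theorem:bettiNumbers} requires a bound within the single ring $R = A_c/\overline{\bff}$ via $\epsilon_R$. Reconciling the two requires showing that, for the EGH-selected lex-segment $L$, the ideals $L + \overline{\bff}$ and $L + \frakb$, while distinct, share the same graded Betti table over $A_c$; this is expected to follow from EGH for $\overline{\bff}$ together with the consecutive-cancellation framework of Section~\ref{sec:gradedBettiNos}, but it is the most delicate piece of the argument.
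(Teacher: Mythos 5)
Your overall architecture has a genuine gap at its base case, and it is not the repairable kind you describe in your last paragraph. You propose to feed Theorem~\ref{theorem:bettiNumbers} the ring $R = \Bbbk[x_1,\dots,x_c]/(\overline{\bff})$ together with an ``embedding'' $\epsilon_R(H) = (L+\overline{\bff})R$. But the Eisenbud--Green--Harris hypothesis only says that $H$ is realized by $L+\frakb$ (lex plus \emph{pure powers}); it says nothing about $H_{L+\overline{\bff}}$, and in general $H_{L+(\overline{\bff})} \neq H_{L+\frakb}$, so your map is not Hilbert-function-preserving and is not an embedding of $\hilbertPoset_R$ into $\idealPoset_R$ in the sense required. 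Whether $\hilbertPoset_{A/(\bff)}$ admits \emph{any} embedding for a general regular sequence $\bff$ is not known, and the paper never assumes it. Consequently the iterated application of Theorem~\ref{theorem:bettiNumbers} starting from $A_c/(\overline{\bff})$ has no valid starting point: both the embedding hypothesis and the Betti-number hypothesis $\beta^{A_c}(R/\bar I)\le\beta^{A_c}(R/\epsilon_R(H_{\bar I}))$ are unavailable, since Artinian LPP bounds $\beta^{A_c}(\bar I)$ by $\beta^{A_c}(L+\frakb)$, an ideal in a \emph{different} quotient ring. The reconciliation you defer --- that $L+\overline{\bff}$ and $L+\frakb$ have the same Betti table --- is not a consequence of EGH or of consecutive cancellations; it is at least as strong as the statement being proved, and nothing in Section~\ref{sec:gradedBettiNos} delivers it. (There is also a smaller slip: with $w(x_i)=0$ for $i\le c$ and $w(x_i)=1$ for $i>c$, the $w$-initial form of $f_i=\overline{f}_i+g_i$ is the top-weight part of $g_i$, not $\overline{f}_i$; the weight must be reversed.)

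The paper's proof sidesteps the need for an embedding of $\hilbertPoset_{A/(\bff)}$ entirely. It inducts on $n$ one variable at a time: after a weight degeneration and $x_n$-stabilization (which only increase Betti numbers), $J$ decomposes as $\bigoplus_i J_{\langle i\rangle}x_n^i$ with each slice containing $\tilde{\bff}$. The cross-ring comparison --- from ideals containing the regular sequence to ideals containing pure powers --- happens \emph{slice by slice} via the inductive LPP hypothesis in $n-1$ variables, using the decomposition $J/x_nJ = J_{\langle 0\rangle}\oplus N$ with $N$ a direct sum of shifted copies of $\Bbbk$ (exactly the bookkeeping of the proof of Theorem~\ref{theorem:bettiNumbers}). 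This produces an intermediate ideal $J'=\bigoplus_i\lpp_{\tilde{\bff}}(J_{\langle i\rangle})x_n^i$ that already lives over the Clements--Lindstr\"om ring $A/\frakb$, where a genuine embedding exists; only then is Theorem~\ref{theorem:bettiNumbers} invoked, to compare $J'$ with $\lpp_{\bff}(J)$ inside $A/\frakb$. If you want to salvage your approach, you must likewise route the comparison through $A/\frakb$ rather than through $A/(\overline{\bff})A$.
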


\begin{proof}
By~\cite
{CaMaEGH08}*{Proposition~10}, 
$\bff$ satisfies the Eisenbud-Green-Harris conjecture. Hence we need to
show that if $J$ is an $A$-ideal containing $\bff$, then 
$H_{\tor_i^A(J, \Bbbk)} \preccurlyeq H_{\tor_i^A(\lpp_{\bff}(J), \Bbbk)}$. 
We may assume that $c<n$. 
Let $\tilde{\bff} = \tilde {f_1},\dots,\tilde {f_c}$ the images
of $\bff$ in $\tilde{A} : =\Bbbk[x_1,\dots,x_{n-1}]$.
Inductively we can assume that for all 
$\tilde{A}$-ideals $I$ containing $\tilde{\bff}$, 
$H_{\tor_i^{\tilde{A}}(I, \Bbbk)} \preccurlyeq 
H_{\tor_i^{\tilde{A}}(\lpp_{\tilde{\bff}}(I), \Bbbk)}$.
Let $J$ be an $A$-ideal containing $\bff$. 
By taking the initial ideal with respect to a weight $w$, 
$w(x_1) = \cdots = w(x_{n-1}) = 1$ 
and $w(x_n) = 0$, we may assume that $J$ contains
$\tilde{\bff}$ and that it has a decomposition (as an $\tilde{A}$-submodule
of $A$) $J = \bigoplus_i J_{<i>}x_n^i$ where the $J_{<i>}$ are
$\tilde{A}$-ideals containing $\tilde{\bff}$. By applying~\cite
{CaKuEmbeddings10}*{Lemma~4.1} 
we may assume that $J(A/\fraka)$ is $x_n$-stable.
The proof of this lemma involves taking initial ideals and applying
distraction.
The values of $\beta_{i,j}^A(-)$ do not decrease
{while}
taking initial ideals (which can be argued, e.g., the same way as in~\cite
{eiscommalg}*{Theorem~15.17}), and remain unchanged
{while applying} distraction
(since distraction can be interpreted as polarization followed by going
modulo a regular element~\cite{CaSbLPPCoh12}*{Section~1.3}).
Let ${J'} = \bigoplus_i \lpp_{\tilde{\bff}}(J_{<i>})x_n^i$, where, for each
$i$, $\lpp_{\tilde{\bff}}(J_{<i>})$ is the lex-plus-powers ideal of
$\tilde{A}$ for the sequence $\tilde{\bff}$. Note that we can obtain the
ideals $\lpp_{\tilde{\bff}}(J_{<i>}) \tilde{A}/\frakb$ by 
embedding their Hilbert functions, so ${J'}A/\frakb$ is $x_n$-stable. 
We now claim that for all $i$, 
$H_{\tor_i^{A}({J'}, \Bbbk)} \succcurlyeq H_{\tor_i^{A}(J, \Bbbk)}$. 
Assume the claim. Now $\lpp_{\bff}(J)$ is the preimage of
$\epsilon_{A/\frakb}(H_{J'})$, so by
Theorem~\ref{theorem:bettiNumbers}, 
$H_{\tor_i^{A}(\lpp_{\bff}(J), \Bbbk)} \succcurlyeq H_{\tor_i^{A}(J, \Bbbk)}$. 

Now to prove the claim, we follow the strategy of the proof of
Theorem~\ref{theorem:bettiNumbers}. Note that since ${J'}$ and $J$ are
$x_n$-stable, 
\[
{J'}/x_n{J'} = \lpp_{\tilde{\bff}}(J_{<0>}) \oplus M \;\text{and} \;
J/x_nJ = J_{<0>} \oplus N 
\]
for some $\tilde{A}$-modules $M$ and $N$ that are annihilated by 
$(x_1, \ldots, x_{n-1})$. Since $H_{\lpp_{\tilde{\bff}}(J_{<0>})} = 
H_{J_{<0>}}$, we see that $H_M = H_N$. By the induction hypothesis, 
$H_{\tor_i^{\tilde{A}}(\lpp_{\tilde{\bff}}(J_{<0>}), \Bbbk)} \succcurlyeq 
H_{\tor_i^{\tilde{A}}(J_{<0>}, \Bbbk)}$. Hence
$H_{\tor_i^{A}({J'}, \Bbbk)} \succcurlyeq H_{\tor_i^{A}(J, \Bbbk)}$. 
\end{proof}


\def\cfudot#1{\ifmmode\setbox7\hbox{$\accent"5E#1$}\else
  \setbox7\hbox{\accent"5E#1}\penalty 10000\relax\fi\raise 1\ht7
  \hbox{\raise.1ex\hbox to 1\wd7{\hss.\hss}}\penalty 10000 \hskip-1\wd7\penalty
  10000\box7}
\begin{bibdiv}
\begin{biblist}

\bib{BrHe:CM}{book}{
      author={Bruns, Winfried},
      author={Herzog, J{\"u}rgen},
       title={Cohen-{M}acaulay rings},
      series={Cambridge Studies in Advanced Mathematics},
   publisher={Cambridge University Press},
     address={Cambridge},
        date={1993},
      volume={39},
        ISBN={0-521-41068-1},
}

\bib{BigaUpperBds93}{article}{
      author={Bigatti, Anna~Maria},
       title={Upper bounds for the {B}etti numbers of a given {H}ilbert
  function},
        date={1993},
        ISSN={0092-7872},
     journal={Comm. Algebra},
      volume={21},
      number={7},
       pages={2317\ndash 2334},
         url={http://dx.doi.org/10.1080/00927879308824679},
}

\bib{CavThesis04}{book}{
      author={Caviglia, Giulio},
       title={Koszul algebras, {C}astelnuovo-{M}umford regularity, and generic
  initial ideals},
        date={2004},
        ISBN={978-0496-13597-4},
        note={Thesis (Ph.D.)--University of Kansas},
}

\bib{ChPeGaMaxBettiNos02}{article}{
      author={Chardin, Marc},
      author={Gasharov, Vesselin},
      author={Peeva, Irena},
       title={Maximal {B}etti numbers},
        date={2002},
        ISSN={0002-9939},
     journal={Proc. Amer. Math. Soc.},
      volume={130},
      number={7},
       pages={1877\ndash 1880 (electronic)},
         url={http://dx.doi.org/10.1090/S0002-9939-02-06471-7},
}

\bib{CaKuEmbeddings10}{article}{
      author={Caviglia, Giulio},
      author={Kummini, Manoj},
       title={Poset embeddings of {H}ilbert functions},
        date={2013},
        ISSN={0025-5874},
     journal={Math. Z.},
      volume={274},
      number={3-4},
       pages={805\ndash 819},
         url={http://dx.doi.org/10.1007/s00209-012-1097-6},
}

\bib{ClemLindMacaulayThm69}{article}{
      author={Clements, G.~F.},
      author={Lindstr{\"o}m, B.},
       title={A generalization of a combinatorial theorem of {M}acaulay},
        date={1969},
     journal={J. Combinatorial Theory},
      volume={7},
       pages={230\ndash 238},
}

\bib{CaMaEGH08}{article}{
      author={Caviglia, Giulio},
      author={Maclagan, Diane},
       title={Some cases of the {E}isenbud-{G}reen-{H}arris conjecture},
        date={2008},
        ISSN={1073-2780},
     journal={Math. Res. Lett.},
      volume={15},
      number={3},
       pages={427\ndash 433},
}

\bib{CaSbLPPCoh12}{misc}{
      author={Caviglia, Giulio},
      author={Sbarra, Enrico},
       title={The lex-plus-power inequality for local cohomology modules},
        date={2012},
        note={arXiv:1208.2187 [math.AC]},
}

\bib{eiscommalg}{book}{
      author={Eisenbud, David},
       title={Commutative algebra, with a view toward algebraic geometry},
      series={Graduate Texts in Mathematics},
   publisher={Springer-Verlag},
     address={New York},
        date={1995},
      volume={150},
        ISBN={0-387-94268-8; 0-387-94269-6},
}

\bib{ElKeMinimalReslns90}{article}{
      author={Eliahou, Shalom},
      author={Kervaire, Michel},
       title={Minimal resolutions of some monomial ideals},
        date={1990},
        ISSN={0021-8693},
     journal={J. Algebra},
      volume={129},
      number={1},
       pages={1\ndash 25},
         url={http://dx.doi.org/10.1016/0021-8693(90)90237-I},
}

\bib{FrRiLPP07}{incollection}{
      author={Francisco, Christopher~A.},
      author={Richert, Benjamin~P.},
       title={Lex-plus-powers ideals},
        date={2007},
   booktitle={Syzygies and {H}ilbert functions},
      series={Lect. Notes Pure Appl. Math.},
      volume={254},
   publisher={Chapman \& Hall/CRC, Boca Raton, FL},
       pages={113\ndash 144},
}

\bib{GasharovRestrictionTheorem99}{article}{
      author={Gasharov, Vesselin},
       title={Hilbert functions and homogeneous generic forms. {II}},
        date={1999},
        ISSN={0010-437X},
     journal={Compositio Math.},
      volume={116},
      number={2},
       pages={167\ndash 172},
         url={http://dx.doi.org/10.1023/A:1000826428404},
}

\bib{GMPMappingCones11}{article}{
      author={Gasharov, Vesselin},
      author={Murai, Satoshi},
      author={Peeva, Irena},
       title={Applications of mapping cones with {C}lements-{L}indstr\"om
  rings},
        date={2011},
        ISSN={0021-8693},
     journal={J. Algebra},
      volume={325},
       pages={34\ndash 55},
         url={http://dx.doi.org/10.1016/j.jalgebra.2010.10.006},
}

\bib{GMPveronese10}{article}{
      author={Gasharov, Vesselin},
      author={Murai, Satoshi},
      author={Peeva, Irena},
       title={Hilbert schemes and maximal {B}etti numbers over {V}eronese
  rings},
        date={2011},
        ISSN={0025-5874},
     journal={Math. Z.},
      volume={267},
      number={1-2},
       pages={155\ndash 172},
         url={http://dx.doi.org/10.1007/s00209-009-0614-8},
}

\bib{GreenHyperplane89}{incollection}{
      author={Green, Mark},
       title={Restrictions of linear series to hyperplanes, and some results of
  {M}acaulay and {G}otzmann},
        date={1989},
   booktitle={Algebraic curves and projective geometry ({T}rento, 1988)},
      series={Lecture Notes in Math.},
      volume={1389},
   publisher={Springer},
     address={Berlin},
       pages={76\ndash 86},
         url={http://dx.doi.org/10.1007/BFb0085925},
}

\bib{GreenGins98}{incollection}{
      author={Green, Mark~L.},
       title={Generic initial ideals},
        date={1998},
   booktitle={Six lectures on commutative algebra ({B}ellaterra, 1996)},
      series={Progr. Math.},
      volume={166},
   publisher={Birkh\"auser},
     address={Basel},
       pages={119\ndash 186},
}

\bib{M2}{misc}{ label={M2},
      author={Grayson, Daniel~R.},
      author={Stillman, Michael~E.},
       title={Macaulay 2, a software system for research in algebraic
  geometry},
        date={2006},
        note={Available at \href{http://www.math.uiuc.edu/Macaulay2/}
  {http://www.math.uiuc.edu/Macaulay2/}},
}

\bib{HerzPopeRestrictionTheorem98}{article}{
      author={Herzog, J{\"u}rgen},
      author={Popescu, Dorin},
       title={Hilbert functions and generic forms},
        date={1998},
        ISSN={0010-437X},
     journal={Compositio Math.},
      volume={113},
      number={1},
       pages={1\ndash 22},
         url={http://dx.doi.org/10.1023/A:1000461715435},
}

\bib{HuleMaxBettiNos93}{article}{
      author={Hulett, Heather~A.},
       title={Maximum {B}etti numbers of homogeneous ideals with a given
  {H}ilbert function},
        date={1993},
        ISSN={0092-7872},
     journal={Comm. Algebra},
      volume={21},
      number={7},
       pages={2335\ndash 2350},
         url={http://dx.doi.org/10.1080/00927879308824680},
}

\bib{IyPaMaxMinResln98}{article}{
      author={Iyengar, Srikanth},
      author={Pardue, Keith},
       title={Maximal minimal resolutions},
        date={1999},
        ISSN={0075-4102},
     journal={J. Reine Angew. Math.},
      volume={512},
       pages={27\ndash 48},
         url={http://dx.doi.org/10.1515/crll.1999.055},
}

\bib{KatonaFiniteSets68}{incollection}{
      author={Katona, G.},
       title={A theorem of finite sets},
        date={1968},
   booktitle={Theory of graphs ({P}roc. {C}olloq., {T}ihany, 1966)},
   publisher={Academic Press},
     address={New York},
       pages={187\ndash 207},
}

\bib{KruskalNoOfSimpl63}{incollection}{
      author={Kruskal, Joseph~B.},
       title={The number of simplices in a complex},
        date={1963},
   booktitle={Mathematical optimization techniques},
   publisher={Univ. of California Press, Berkeley, Calif.},
       pages={251\ndash 278},
}

\bib{MeMuColoured10}{article}{
      author={Mermin, Jeff},
      author={Murai, Satoshi},
       title={Betti numbers of lex ideals over some macaulay-lex rings},
        date={2010},
        ISSN={0925-9899},
     journal={J. Algebraic Combin.},
      volume={31},
      number={2},
       pages={299\ndash 318},
         url={http://dx.doi.org/10.1007/s10801-009-0192-1},
}

\bib{MeMuLPPPurePwrs08}{article}{
      author={Mermin, Jeff},
      author={Murai, Satoshi},
       title={The lex-plus-powers conjecture holds for pure powers},
        date={2011},
        ISSN={0001-8708},
     journal={Adv. Math.},
      volume={226},
      number={4},
       pages={3511\ndash 3539},
         url={http://dx.doi.org/10.1016/j.aim.2010.08.022},
}

\bib{MerminPeevaLexifying}{article}{
      author={Mermin, Jeffrey},
      author={Peeva, Irena},
       title={Lexifying ideals},
        date={2006},
        ISSN={1073-2780},
     journal={Math. Res. Lett.},
      volume={13},
      number={2-3},
       pages={409\ndash 422},
}

\bib{MuPeCLRings12}{article}{
      author={Murai, Satoshi},
      author={Peeva, Irena},
       title={{H}ilbert schemes and {B}etti numbers over
  {C}lements-{L}indstr\"om rings},
        date={2012},
     journal={Compos. Math.},
      volume={148},
      number={5},
       pages={1337\ndash 1364},
}

\bib{MPSSquares08}{article}{
      author={Mermin, Jeffrey},
      author={Peeva, Irena},
      author={Stillman, Mike},
       title={Ideals containing the squares of the variables},
        date={2008},
        ISSN={0001-8708},
     journal={Adv. Math.},
      volume={217},
      number={5},
       pages={2206\ndash 2230},
         url={http://dx.doi.org/10.1016/j.aim.2007.11.014},
}

\bib{MuraiBPP08}{article}{
      author={Murai, Satoshi},
       title={Borel-plus-powers monomial ideals},
        date={2008},
        ISSN={0022-4049},
     journal={J. Pure Appl. Algebra},
      volume={212},
      number={6},
       pages={1321\ndash 1336},
         url={http://dx.doi.org/10.1016/j.jpaa.2007.09.010},
}

\bib{PardueDefClass96}{article}{
      author={Pardue, Keith},
       title={Deformation classes of graded modules and maximal {B}etti
  numbers},
        date={1996},
        ISSN={0019-2082},
     journal={Illinois J. Math.},
      volume={40},
      number={4},
       pages={564\ndash 585},
}

\bib{PeevConsecCanc04}{article}{
      author={Peeva, Irena},
       title={Consecutive cancellations in {B}etti numbers},
        date={2004},
        ISSN={0002-9939},
     journal={Proc. Amer. Math. Soc.},
      volume={132},
      number={12},
       pages={3503\ndash 3507},
         url={http://dx.doi.org/10.1090/S0002-9939-04-07517-3},
}

\bib{PeStEKResln08}{article}{
      author={Peeva, Irena},
      author={Stillman, Mike},
       title={The minimal free resolution of a {B}orel ideal},
        date={2008},
        ISSN={0723-0869},
     journal={Expo. Math.},
      volume={26},
      number={3},
       pages={237\ndash 247},
         url={http://dx.doi.org/10.1016/j.exmath.2007.10.003},
}

\bib{ShakPieceLex03}{article}{
      author={Shakin, D.~A.},
       title={Piecewise lexsegment ideals},
        date={2003},
        ISSN={0368-8666},
     journal={Mat. Sb.},
      volume={194},
      number={11},
       pages={117\ndash 140},
         url={http://dx.doi.org/10.1070/SM2003v194n11ABEH000783},
}

\end{biblist}
\end{bibdiv}
\end{document}